\newtheorem{thm}{Theorem}[section]
\newtheorem{Thm}{Theorem}
\newtheorem{lem}[thm]{Lemma}
\newtheorem{pro}[thm]{Proposition}
\theoremstyle{definition}
\numberwithin{equation}{section}
\newcommand{\vx}{{\bf x}}
\newcommand{\vt}{{\bf t}}
\newcommand{\vy}{{\bf y}}
\newcommand{\vs}{{\bf s}}
\newcommand{\pr}{\textup{P}}
\newcommand{\ex}{\mathbb{E}}
\newcommand{\Va}{\textup{Var}(q)}
\begin{document}

\baselineskip=17pt

\title[the limiting distribution in the Shanks–-R\'enyi
prime number race]{Large deviations of the limiting distribution in the Shanks–-R\'enyi
prime number race}

\author[Youness Lamzouri]{Youness Lamzouri}

\address{Department of Mathematics, University of Illinois at Urbana-Champaign,
1409 W. Green Street,
Urbana, IL, 61801
USA}

\email{lamzouri@math.uiuc.edu}

\date{}

\begin{abstract}
Let $q\geq 3$, $2\leq r\leq \phi(q)$ and $a_1,\dots,a_r$ be distinct residue classes modulo $q$ that are relatively prime to $q$. Assuming the Generalized Riemann Hypothesis GRH and the Linear Independence Hypothesis LI, M. Rubinstein and P. Sarnak \cite{RS} showed that the vector-valued function $E_{q;a_1,\dots,a_r}(x)=(E(x;q,a_1), \dots, E(x;q,a_r)),$ where $E(x;q,a)= \frac{\log x}{\sqrt{x}}\left(\phi(q)\pi(x;q,a)-\pi(x)\right)$, has a limiting distribution $\mu_{q;a_1,\dots,a_r}$ which is absolutely continuous on $\mathbb{R}^r$. Furthermore, they proved that for $r$ fixed, $\mu_{q;a_1,\dots,a_r}$ tends to a multidimensional Gaussian as $q\to\infty$. In the present paper, we determine the exact rate of this convergence, and investigate the asymptotic behavior of the large deviations of $\mu_{q;a_1,\dots,a_r}$.
\end{abstract}

\subjclass[2010]{Primary 11M26; Secondary 11N13, 60F10}

\keywords{Zeros of Dirichlet $L$-functions, primes in arithmetic progressions, large deviations of sums of random variables.}

\thanks{The author is supported by a Postdoctoral Fellowship from the Natural Sciences and Engineering Research Council of Canada.}

\maketitle

\section{Introduction}
A classical problem in analytic number theory is the so-called `` Shanks and R\'enyi prime number race'' (see \cite{KT}) which is described in the following way. Let $q\geq 3$ and $2\leq r\leq \phi(q)$ be positive integers. For an ordered $r$-tuple of distinct reduced residues $(a_1,a_2,\dots, a_r)$ modulo $q$ we denote by $P_{q; a_1,\dots, a_r}$ the set of real numbers $x\geq 2$ such that
$$ \pi(x;q,a_1)>\pi(x;q,a_2)>\dots>\pi(x;q,a_r).$$
Will the sets $P_{q; a_{\sigma(1)},\dots, a_{\sigma(r)}}$ contain arbitrarily large values, for any permutation $\sigma$ of $\{1,2,\dots, r\}$?

A result of J. E. Littlewood \cite{Li} from 1914 shows  that the answer is yes in the cases $(q,a_1,a_2)=(4,1,3)$ and $(q,a_1,a_2)=(3,1,2)$.
Similar results to other moduli in the case $r=2$ were subsequently derived by S. Knapowski and P. Tur\'an \cite{KT} (under some hypotheses on the zeros of Dirichlet $L$-functions), and special cases of the prime number race with $r\geq 3$  were considered by J. Kaczorowski \cite{Ka1}, \cite{Ka2}.

In 1994,  M. Rubinstein and P. Sarnak \cite{RS} completely solved this problem, conditionally on the assumptions of the Generalized Riemann Hypothesis GRH and
the Linear Independence Hypothesis LI (which is the assumption that the nonnegative imaginary parts of the zeros of all Dirichlet $L$-functions attached to primitive characters modulo $q$ are linearly independent over $\mathbb{Q}$). To describe their results, we first define some notation.
For any real number $x\geq 2$ we introduce the vector-valued function
$$ E_{q;a_1,\dots,a_r}(x):=(E(x;q,a_1), \dots, E(x;q,a_r)),$$
where
$$ E(x;q,a):=\frac{\log x}{\sqrt{x}}\left(\phi(q)\pi(x;q,a)-\pi(x)\right).$$
The normalization is such that, if we assume GRH, $E_{q;a_1,\dots,a_r}(x)$ varies roughly boundedly as $x$ varies.
Rubinstein and Sarnak showed, assuming GRH, that the function $E_{q;a_1,\dots,a_r}(x)$ has a limiting distribution $\mu_{q;a_1,\dots,a_r}$. More precisely, they proved
\begin{equation}
 \lim_{X\to\infty}\frac{1}{\log X} \int_2^X f\left(E_{q;a_1,\dots,a_r}(x)\right)\frac{dx}{x}= \int_{\mathbb{R}^r}f(x_1,\dots,x_r)
d\mu_{q;a_1,\dots,a_r},
\end{equation}
for all bounded, continuous functions $f$ on $\mathbb{R}^r$. Furthermore, assuming both GRH and LI, they showed that $\mu_{q;a_1,\dots,a_r}$ is absolutely continuous with respect to the Lebesgue measure on $\mathbb{R}^r$ if $r<\phi(q)$. (When $r=\phi(q)$, $\mu_{q;a_1,\dots,a_r}$ is shown to be absolutely continuous with respect to the Lebesgue measure on the hyperplane $\sum_{j=1}^rx_j=0$.)  As a consequence, under GRH and LI, the logarithmic density of the set $P_{q;a_1,\dots,a_r}$ defined by
$$ \lim_{x\to \infty}\frac{1}{\log x}\int_{t\in P_{q;a_1,\dots, a_r}\cap [2,x]}\frac{dt}{t}$$
exists and is positive.

Here and throughout we shall use the notations $||\vx||=\sqrt{\sum_{j=1}^r x_j^2}$ and $|\vx|_{\infty}=\max_{1\leq i\leq r}|x_i|$ for the Euclidean norm and the maximum norm on $\mathbb{R}^r$ respectively. In \cite{RS}, Rubinstein and Sarnak also studied the behavior of the tail $\mu_{q;a_1,\dots,a_r}(||\vx||>V)$ when $q$ is fixed. They showed, under GRH, that for all distinct reduced residues $a_1,\dots,a_r$ modulo $q$ we have
\begin{equation}
 \exp\left(-\exp(c_2(q)V)\right)\ll_q \mu_{q;a_1,\dots,a_r}(||\vx||>V) \ll_q\exp\left(-c_1(q)\sqrt{V}\right),
\end{equation}
for some $c_1(q), c_2(q)>0$ which depend on $q$.

In this paper we investigate large deviations of the distribution $\mu_{q;a_1,\dots,a_r}$ uniformly as $q\to\infty$, under the additional assumption of LI. For a non-trivial character $\chi$ modulo $q$, we denote by $\{\gamma_{\chi}\}$ the sequence of imaginary parts of the non-trivial zeros of $L(s,\chi)$. Let $\chi_0$ denote the principal character modulo $q$ and define $S= \cup_{\chi\neq \chi_0\bmod q}\{\gamma_{\chi}\}$. Moreover, let  $\{U(\gamma_{\chi})\}_{\gamma_{\chi}\in S}$ be a sequence of independent random variables uniformly distributed on the unit circle. Rubinstein and Sarnak established, under GRH and LI, that the distribution $\mu_{q;a_1,\dots,a_r}$ is the same as the probability measure corresponding to the random vector
\begin{equation}
X_{q;a_1,\dots,a_r}= (X(q,a_1),\dots,X(q,a_r)),
\end{equation}
where
$$ X(q,a)= -C_q(a)+ \sum_{\substack{\chi\neq \chi_0\\ \chi\bmod q}}\sum_{\gamma_{\chi}>0} \frac{2\text{Re}(\chi(a)U(\gamma_{\chi}))}{\sqrt{\frac14+\gamma_{\chi}^2}},$$
$\chi_0$ is the principal character modulo $q$ and $$C_q(a):=-1+ \sum_{\substack{b^2\equiv a \bmod q\\ 1\leq b\leq q}}1.$$
Note that for $(a,q)=1$ the function $C_q(a)$ takes only two values: $C_q(a)=-1$ if $a$ is a non-square modulo $q$, and $C_q(a)=C_q(1)$ if $a$ is a square modulo $q$. Furthermore, an elementary argument shows that $C_q(a)<d(q)\ll_{\epsilon}q^{\epsilon}$ for any $\epsilon>0$, where $d(q)=\sum_{m|q}1$ is the usual divisor function.

Let $\text{Cov}_{q;a_1,\dots,a_r}$ be the covariance matrix of $X_{q;a_1,\dots,a_r}$. A straightforward computation shows that the entries of  $\text{Cov}_{q;a_1,\dots,a_r}$ are

$$\textup{Cov}_{q;a_1,\dots,a_r}(j,k)= \begin{cases}  \Va &\text{ if } j=k \\ B_q(a_j,a_k) &\text{ if } j\neq k,\end{cases}$$
where
$$
\Va:=\sum_{\substack{\chi\neq \chi_0\\ \chi\bmod q}}\sum_{\gamma_{\chi}}\frac{1}{\frac14+\gamma_{\chi}^2}=2\sum_{\substack{\chi\neq \chi_0\\ \chi\bmod q}}\sum_{\gamma_{\chi}>0}\frac{1}{\frac14+\gamma_{\chi}^2},$$
  and $$B_q(a,b):=\sum_{\substack{\chi\neq \chi_0 \\ \chi\bmod q}}\sum_{\gamma_{\chi}>0}\frac{\chi\left(\frac{b}{a}\right)+\chi\left(\frac{a}{b}\right)}{\frac14 +\gamma_{\chi}^2}
$$
for $(a,b)\in \mathcal{A}(q)$, where $\mathcal{A}(q)$ is the set of ordered pairs of distinct reduced residues modulo $q$. Assuming GRH, Rubinstein and Sarnak showed that
\begin{equation}
\Va\sim \phi(q)\log q \text{ and } B_q(a,b)=o(\phi(q)\log q) \text{ as } q\to\infty,
\end{equation}
uniformly for all $(a,b)\in \mathcal{A}(q)$. Combining these estimates with an explicit formula for the Fourier transform of $\mu_{q;a_1,\dots,a_r}$ (see (3.1) below) they established, under GRH and LI, that
\begin{equation}
\mu_{q;a_1,\dots,a_r}\left(||\vx||>\lambda\sqrt{\Va}\right)= (2\pi)^{-r/2}\int_{||\vx||>\lambda} \exp\left(-\frac{x_1^2+\cdots+x_r^2}{2}\right)d\vx +o_q(1),
\end{equation}
for any fixed $\lambda>0$.

We refine their result using the approach developed in \cite{La}. More precisely, we prove that the asymptotic formula (1.5) holds uniformly in the range $0<\lambda\leq \sqrt{\log\log q}$ with an optimal error term $O_r(1/\log^2q).$

\begin{Thm} Assume GRH and LI. Fix an integer $r\geq 2$. Let $q$ be large and $a_1,\dots,a_r$ be distinct reduced residues modulo $q$. Then, in the range $0< \lambda\leq \sqrt{\log\log q}$ we have
$$
 \mu_{q;a_1,\dots,a_r}\left(||\vx||>\lambda\sqrt{\Va}\right)=
(2\pi)^{-r/2}\int_{||\vx||>\lambda} \exp\left(-\frac{x_1^2+\cdots+x_r^2}{2}\right)d\vx
 + O_r\left(\frac{1}{\log^2q}\right).
$$
Moreover, there exists an $r$-tuple of distinct reduced residue classes $(a_1,\dots,a_r)$ modulo $q$, such that in the range $1/4<\lambda<3/4$ we have
$$
 \left|\mu_{q;a_1,\dots,a_r}\left(||\vx||>\lambda\sqrt{\Va}\right)-
(2\pi)^{-r/2}\int_{||\vx||>\lambda} \exp\left(-\frac{x_1^2+\cdots+x_r^2}{2}\right)d\vx\right|
\gg_r\frac{1}{\log^2q}.
$$
\end{Thm}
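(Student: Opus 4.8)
The plan is to derive both statements from the explicit formula (3.1) for the Fourier transform of $\mu:=\mu_{q;a_1,\dots,a_r}$, which expresses $\widehat\mu(\vt)$ as $\exp(-i\sum_j t_jC_q(a_j))$ times an infinite product, over the nontrivial zeros $\gamma_\chi$, of Bessel factors $J_0\bigl(2|\sum_j t_j\chi(a_j)|/\sqrt{\tfrac14+\gamma_\chi^2}\bigr)$. Using $\log J_0(y)=-\tfrac14 y^2-\tfrac1{64}y^4+O(y^6)$ (for $y$ bounded), the identity $\sum_{\chi\neq\chi_0}\sum_{\gamma_\chi>0}|\sum_j t_j\chi(a_j)|^2/(\tfrac14+\gamma_\chi^2)=\tfrac12\,\vt^{T}\,\textup{Cov}_{q;a_1,\dots,a_r}\vt$, the fact that $\widehat\mu(\vt)\exp(i\sum_j t_jC_q(a_j))=\prod J_0(\cdots)$ is a real \emph{even} function of $\vt$ (so no odd‑order terms arise, the summands $2\,\textup{Re}(\chi(a)U(\gamma_\chi))$ being symmetric about $0$), together with $\sum_{\chi\neq\chi_0}\sum_{\gamma_\chi>0}(\tfrac14+\gamma_\chi^2)^{-k}\ll\Va$ for $k\ge2$ and $|C_q(a)|\ll q^\epsilon$, one obtains after the substitution $\vt\mapsto\vt/\sqrt{\Va}$, uniformly for $\|\vt\|\le(\log q)^{10}$,
\[
\widehat\mu\Big(\frac{\vt}{\sqrt{\Va}}\Big)=e^{-\|\vt\|^2/2}\exp\Big(-\tfrac12\,\vt^{T}\Delta\,\vt+O\big(\|\vt\|^4/\Va+q^{-1/2+\epsilon}\|\vt\|\big)\Big),
\]
where $\Delta$ is the symmetric matrix with $\Delta_{jj}=0$ and $\Delta_{jk}=B_q(a_j,a_k)/\Va$ for $j\ne k$; for larger $\|\vt\|$ one uses $|\widehat\mu(\vt)|\le\exp(-\tfrac14\vt^{T}\textup{Cov}_{q;a_1,\dots,a_r}\vt)$ (valid while the Bessel arguments are $\le1$, since $|J_0(y)|\le e^{-y^2/8}$ there) together with the decay estimate for $\widehat\mu$ from \cite{RS} to discard that range. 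The decisive arithmetic input is the uniform bound $|B_q(a,b)|\ll\phi(q)$, refining $B_q(a,b)=o(\phi(q)\log q)$; combined with $\Va\asymp\phi(q)\log q$ it gives $\|\Delta\|\ll1/\log q$.

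Next I would pass to tail probabilities by Fourier inversion of the (smooth) density of $\mu$, which is legitimate under LI; after the change of variables $\vx=\sqrt{\Va}\,\vy$,
\[
\mu\big(\|\vx\|>\lambda\sqrt{\Va}\big)-(2\pi)^{-r/2}\!\!\int_{\|\vx\|>\lambda}\!\!e^{-\|\vx\|^2/2}\,d\vx=\frac1{(2\pi)^r}\int_{\|\vy\|>\lambda}\int_{\mathbb{R}^r}\Big(\widehat\mu\big(\tfrac{\vs}{\sqrt{\Va}}\big)-e^{-\|\vs\|^2/2}\Big)e^{-i\vs\cdot\vy}\,d\vs\,d\vy.
\]
Inserting the expansion above, the term linear in $\Delta$, namely $-\tfrac12\,e^{-\|\vs\|^2/2}\vs^{T}\Delta\vs$, \emph{integrates to zero}, because $\int_{\|\vy\|>\lambda}y_jy_k\,e^{-\|\vy\|^2/2}\,d\vy=0$ for $j\ne k$ by the sign symmetry of the ball; so the leading contribution is $\tfrac18\,e^{-\|\vs\|^2/2}(\vs^{T}\Delta\vs)^2$, and among its monomials only those of the form $\Delta_{jk}^2\,s_j^2s_k^2$ survive the same symmetry. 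Carrying out the resulting Gaussian (Hermite) integrals gives
\[
\mu\big(\|\vx\|>\lambda\sqrt{\Va}\big)-(2\pi)^{-r/2}\!\!\int_{\|\vx\|>\lambda}\!\!e^{-\|\vx\|^2/2}\,d\vx=H(\lambda)\,\frac{\sum_{1\le j<k\le r}B_q(a_j,a_k)^2}{\Va^2}+O_r\Big(\frac1{\log^3q}\Big),
\]
where $H(\lambda)=-\big(2(2\pi)^{r/2}\big)^{-1}\int_{\|\vy\|\le\lambda}(y_1^2-1)(y_2^2-1)e^{-\|\vy\|^2/2}\,d\vy$; the error absorbs the cubic‑in‑$\Delta$ remainder ($\ll\|\Delta\|^3$), the fourth‑cumulant contribution ($\ll1/(\phi(q)\log q)$) and the mean shift ($\ll q^{-1/2+\epsilon}$), all of which are $\ll1/\log^2q$ \emph{uniformly for $0<\lambda\le\sqrt{\log\log q}$} — this being exactly the range in which the $\vs$‑integral is effectively truncated at $\|\vs\|\ll\sqrt{\log\log q}$, so that the cubic remainder is $\ll(\log\log q)^3/\log^3q=o(1/\log^2q)$, and in which the Gaussian main term itself stays $\gg1/\log^2q$. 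Since $|H(\lambda)|\ll_r1$ on $(0,\infty)$ and $\sum_{j<k}B_q(a_j,a_k)^2/\Va^2\ll_r\phi(q)^2/(\phi(q)\log q)^2\ll_r1/\log^2q$, the first assertion follows.

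For the lower bound I would produce a tuple with $\sum_{j<k}B_q(a_j,a_k)^2\gg_r\phi(q)^2$. Writing $B_q(a,b)=\sum_{\chi\neq\chi_0}\textup{Re}(\chi(b/a))(m_\chi+m_{\bar\chi})$ with $m_\chi=\sum_{\gamma_\chi>0}(\tfrac14+\gamma_\chi^2)^{-1}$, the explicit formula gives $m_\chi+m_{\bar\chi}=\log(\textup{cond}\,\chi)+2\,\textup{Re}\,\tfrac{L'}{L}(1,\chi)+O(1)$, so by character orthogonality
\[
B_q(a,b)=\sum_{\chi\neq\chi_0}\textup{Re}\big(\chi(b/a)\big)\log(\textup{cond}\,\chi)+N_q(b/a)+O(1),\qquad N_q(c):=2\sum_{\chi\neq\chi_0}\textup{Re}(\chi(c))\,\textup{Re}\,\tfrac{L'}{L}(1,\chi),
\]
and, again by orthogonality, the first sum equals $-(\log p_0)\,\phi(q/p_0)$ when $b/a\equiv1\pmod{q/p_0}$ for a prime $p_0$ exactly dividing $q$ (with $b/a\not\equiv1\pmod{p_0}$), and equals $0$ otherwise, while $|N_q(c)|\ll\phi(q)$ by the bound of the first step. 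Choosing $q$ divisible by $3$ but not $9$, with $q/3$ having at least two prime factors, and $a_1,a_2$ with $a_2/a_1\equiv1\pmod{q/3}$, $a_2/a_1\not\equiv1\pmod3$ and $a_2/a_1\not\equiv-1\pmod q$, this yields $B_q(a_1,a_2)=-(\log3)\,\phi(q/3)+O(\phi(q))$, which is $\asymp-\phi(q)$ once one checks that the $O(\phi(q))$ here is of genuinely smaller order. One then picks $a_3,\dots,a_r$ generically so that $B_q(a_j,a_k)=o(\phi(q))$ for all pairs $\{j,k\}\ne\{1,2\}$ — possible since the second‑moment estimate $\sum_{(a,b)\in\mathcal{A}(q)}B_q(a,b)^2\ll\phi(q)^2\sum_\chi(m_\chi+m_{\bar\chi})^2\ll\phi(q)^3\log^2q$ shows, via Chebyshev, that for every fixed $b$ all but $o(\phi(q))$ residues $a$ have $B_q(a,b)=o(\phi(q))$. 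Hence $\sum_{j<k}B_q(a_j,a_k)^2=(1+o(1))B_q(a_1,a_2)^2\gg_r\phi(q)^2$, so the correction term above has size $\gg_r|H(\lambda)|/\log^2q$.

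It remains to see that $|H(\lambda)|\gg_r1$ for $1/4<\lambda<3/4$: on the ball $\|\vy\|\le\lambda<1$ both $y_1^2$ and $y_2^2$ are $<1$, so $(y_1^2-1)(y_2^2-1)>0$ throughout, whence $H(\lambda)<0$ and $|H(\lambda)|\ge\min_{[1/4,3/4]}|H|=:c_r>0$; this completes the lower bound and explains why the interval $(1/4,3/4)$ appears — it is exactly where the sign of $(y_1^2-1)(y_2^2-1)$ on $\{\|\vy\|\le\lambda\}$ is under control. The main obstacle, I expect, is the arithmetic input $|B_q(a,b)|\ll\phi(q)$ (equivalently $\max_c|N_q(c)|\ll\phi(q)$): this requires running the explicit formula for $\sum_{\gamma_\chi}(\tfrac14+\gamma_\chi^2)^{-1}$ through a summation over all characters modulo $q$ and showing that the $\Gamma$‑factor and $\tfrac{L'}{L}(1,\chi)$ terms collapse to $O(\phi(q))$ — not merely to $o(\phi(q)\log q)$ as in \cite{RS} — after using orthogonality and the prime number theorem in arithmetic progressions, which is where GRH enters quantitatively (through bounds for $\frac{L'}{L}(1,\chi)$ and for $\psi(x;q,a)$). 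A secondary point is performing the Fourier inversion and the truncation of the $\vs$‑integral with enough uniformity in $\lambda\le\sqrt{\log\log q}$ that no spurious powers of $\log\log q$ degrade the final $O_r(1/\log^2q)$.
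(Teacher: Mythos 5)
Your overall strategy coincides with the paper's: expand $\log\hat\mu$ via $\log J_0(y)=-y^2/4+O(y^4)$, recognize the second--order correction $-\frac{1}{\Va}\sum_{j<k}B_q(a_j,a_k)t_jt_k$, invert, observe that this correction vanishes to first order by parity (the paper's Lemma 3.5), and isolate the surviving quadratic piece $\frac{1}{2\Va^2}\sum_{j<k}B_q(a_j,a_k)^2F_{j,k}(\lambda)$. The required estimates $\max_{a,b}|B_q(a,b)|\ll\phi(q)$ and the sign and size of the correction for $1/4<\lambda<3/4$ also match (your $H(\lambda)=\tfrac12 F_{1,2}(\lambda)$, and the paper proves $F_{j,k}(\lambda)\leq-\delta_r$ in that range). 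Two technical points are glossed over but are repairable along the paper's lines: (i) you must first truncate the $\vy$--integral to $\|\vy\|\ll\sqrt{\Va}\log\log q$ using the moderate--deviations bound (the paper's Theorem 2) before inverting, which introduces a harmless $(\log\log q)^{r}$ into the error; (ii) the large--$\|\vs\|$ range is killed by the paper's Lemma 3.1, not merely by the pointwise bound $|J_0(y)|\le e^{-y^2/8}$ on $|y|\le1$.

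There is, however, a genuine gap in your lower bound. You construct a pair $(a_1,a_2)$ with $B_q(a_1,a_2)\gg\phi(q)$ by taking $q$ divisible by $3$ exactly once (with further side conditions on $q/3$) and $a_2/a_1\equiv1\pmod{q/3}$; but the theorem is stated for \emph{every} large $q$, and your construction simply has no candidate pair when, say, $q$ is prime, or when $q$ is not divisible by a prime to the first power with the required cofactor structure. Worse, even when the construction applies, the main term you extract, $-(\log 3)\phi(q/3)$, is of the \emph{same} order as your stated error $O(\phi(q))$ (since $\phi(q/3)\asymp\phi(q)$), so the computation as written does not certify $|B_q(a_1,a_2)|\gg\phi(q)$; you flag this yourself but do not resolve it. The paper avoids both problems by invoking Proposition 5.1 of \cite{La}: for every $q$ and every $(a,q)=1$ one has $|B_q(a,-a)|\gg\phi(q)$, coming from the parity split $\chi(-1)=\pm1$ rather than from any divisibility hypothesis on $q$. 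Replacing your arithmetic construction by the choice $a_2\equiv-a_1\pmod q$ (and then any distinct $a_3,\dots,a_r$) closes the gap and removes the need for your Chebyshev/second--moment argument to make the remaining $B_q(a_j,a_k)$ small, since in Theorem 3.2 the correction is a sum of \emph{nonnegative} multiples of $-F_{j,k}(\lambda)\ge\delta_r>0$, so extra pairs only help.
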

Since $\Va\sim\phi(q)\log q$, it follows from Theorem 1 that
\begin{equation}
 \mu_{q;a_1,\dots,a_r}(||\vx||>V)=\exp\left(-\frac{V^2}{2\phi(q)\log q}(1+o(1))\right)
\end{equation}
in the range $(\phi(q)\log q)^{1/2} \ll V\leq (1+o(1))(\phi(q)\log q\log\log q)^{1/2}.$ Exploiting the results of H. L. Montgomery and A. M. Odlyzko \cite{MO}, we prove that a similar behavior holds in the much larger range $(\phi(q)\log q)^{1/2}\ll V\ll \phi(q)\log q.$

\begin{Thm} Assume GRH and LI. Fix an integer $r\geq 2$ and a real number $A\geq 1$. Let $q$ be large. Then for all distinct reduced residues $a_1,\dots,a_r$ modulo $q$, we have
$$ \exp\left(-c_2(r,A) \frac{V^2}{\phi(q)\log q}\right) \ll  \mu_{q;a_1,\dots,a_r}(||\vx||>V) \ll \exp\left(-c_1(r,A) \frac{V^2}{\phi(q)\log q}\right),$$
uniformly in the range $(\phi(q)\log q)^{1/2}\ll V\leq A\phi(q)\log q$, where $c_2(r,A)>c_1(r,A)$ are positive numbers that depend only on $r$ and $A$.
\end{Thm}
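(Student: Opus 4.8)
The strategy is to sandwich $\mu_{q;a_1,\dots,a_r}(||\vx||>V)$ between one-dimensional tail probabilities and then estimate the latter via exponential moments. Since $|x_1|\le ||\vx||\le \sqrt r\,|\vx|_\infty$, the Rubinstein--Sarnak identification of $\mu_{q;a_1,\dots,a_r}$ with the law of $X_{q;a_1,\dots,a_r}$ (valid under GRH and LI) gives
\[
\pr\!\left(|X(q,a_1)|>V\right)\ \le\ \mu_{q;a_1,\dots,a_r}(||\vx||>V)\ \le\ \sum_{j=1}^{r}\pr\!\left(|X(q,a_j)|>V/\sqrt r\right),
\]
so it suffices to bound, for each fixed reduced residue $a$, the tail $\pr(|X(q,a)|>W)$ from above and below by expressions of the shape $\exp\!\big(-\Theta_{r,A}(W^2/(\phi(q)\log q))\big)$, uniformly for $(\phi(q)\log q)^{1/2}\ll W\le A\phi(q)\log q$. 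Throughout write $\rho_\gamma=\sqrt{\tfrac14+\gamma_\chi^2}$, so that $X(q,a)+C_q(a)$ is a sum of independent random variables, each bounded by $2/\rho_\gamma\le 4$, with variances summing to $\Va$ and $\sum_{\gamma_\chi>0}\rho_\gamma^{-2}=\tfrac12\Va\asymp\phi(q)\log q$.

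For the upper bound a Chernoff estimate suffices: using independence, the identity $\ex[e^{s\,\mathrm{Re}(\chi(a)U(\gamma_\chi))}]=I_0(s)$ for the modified Bessel function $I_0$, and the elementary inequality $I_0(s)\le e^{s^2/4}$ (valid for every real $s$ by term-by-term comparison of Taylor series), one gets $\ex[e^{tX(q,a)}]\le\exp\!\big(-tC_q(a)+\tfrac12 t^2\,\Va\big)$ for all $t\ge 0$. Optimizing in $t$ and using $C_q(a)<d(q)\ll_\varepsilon q^\varepsilon=o(W)$ together with $\Va\sim\phi(q)\log q$ yields $\pr(X(q,a)>W)\le\exp(-(1+o(1))W^2/(2\phi(q)\log q))$, and symmetrically for the lower tail. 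This already proves the upper bound in the theorem, say with $c_1(r,A)=1/(8r)$, and in fact uniformly for all $W\gg(\phi(q)\log q)^{1/2}$ with no restriction from above.

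The lower bound is the substantive part; here I would invoke the large-deviation theory for sums of independent random variables of Montgomery and Odlyzko \cite{MO}, or equivalently carry out the underlying exponential tilting argument directly. Let $\Lambda_{q,a}(t)=-tC_q(a)+\sum_{\gamma_\chi>0}\log I_0(2t/\rho_\gamma)$ be the cumulant generating function of $X(q,a)$ and $I_{q,a}(W)=\sup_{t\ge0}(tW-\Lambda_{q,a}(t))$ its Legendre transform. One needs \emph{(i)} the two-sided estimate $I_{q,a}(W)\asymp_A W^2/(\phi(q)\log q)$ uniformly in the stated range, and \emph{(ii)} a lower bound $\pr(X(q,a)>W)\gg_A\exp(-O_A(I_{q,a}(W)))$ of matching exponential order. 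For \emph{(i)}, the bound $I_{q,a}(W)\gg W^2/(\phi(q)\log q)$ follows by plugging the test value $t=W/\Va$ into $tW-\Lambda_{q,a}(t)$ and using $\log I_0(x)\le x^2/4$; the reverse bound uses $\log I_0(x)\gg_A x^2$ on bounded intervals together with the crucial fact that the optimizing parameter $t^{*}$, i.e.\ the unique positive solution of $\Lambda_{q,a}'(t^{*})=W$, satisfies $t^{*}\ll_A 1$ throughout the range $W\le A\phi(q)\log q$ --- this rests on the standard zero-counting estimate $\sum_{0<\gamma_\chi\le T}\rho_\gamma^{-1}\gg\phi(q)\log q\cdot\log T$, which forces $\Lambda_{q,a}'(t)$ to exceed $A\phi(q)\log q$ once $t$ is larger than a constant depending only on $A$. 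For \emph{(ii)}, tilting by $t^{*}$ (passing to the law $d\pr^{*}=e^{t^{*}X(q,a)-\Lambda_{q,a}(t^{*})}d\pr$) makes $X(q,a)$ a sum of independent bounded variables with mean $W$ and variance $\Lambda_{q,a}''(t^{*})\asymp_A\phi(q)\log q$; a Berry--Esseen bound (applicable since the summands are bounded by $4$ and the variance tends to infinity) shows that the event $\{W<X(q,a)\le W+\sqrt{\Lambda_{q,a}''(t^{*})}\}$ has $\pr^{*}$-probability $\gg 1$, and untilting gives
\[
\pr\!\left(X(q,a)>W\right)\ \gg\ \exp\!\Big({-}I_{q,a}(W)-t^{*}\sqrt{\Lambda_{q,a}''(t^{*})}\Big)\ \gg_A\ \exp\!\Big({-}O_A\big(W^2/(\phi(q)\log q)\big)\Big),
\]
where the last step uses $t^{*}\sqrt{\Lambda_{q,a}''(t^{*})}\asymp_A W/\sqrt{\phi(q)\log q}\ll W^2/(\phi(q)\log q)$ since $W\gg(\phi(q)\log q)^{1/2}$.

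I expect the main obstacle to be part \emph{(i)}: controlling the Legendre transform $I_{q,a}(W)$ --- equivalently the optimal tilt $t^{*}$ --- uniformly over the whole range $(\phi(q)\log q)^{1/2}\ll W\le A\phi(q)\log q$, which interpolates between the Gaussian regime ($t^{*}$ small, $I_{q,a}(W)\sim W^2/(2\phi(q)\log q)$) and the threshold past which, as in the Rubinstein--Sarnak bound (1.2), the tail decays much faster; the hypothesis $W\le A\phi(q)\log q$ is precisely what keeps $t^{*}$ bounded and hence $I_{q,a}(W)$ of order $W^2/(\phi(q)\log q)$. One must also check, uniformly in $q$, that the contributions of $C_q(a)$ and of the high-lying zeros are genuinely negligible in all of the above; quoting the theorems of \cite{MO} directly does not avoid this, since their hypotheses for the sequence $\{2/\rho_{\gamma}\}_{\gamma_\chi\in S}$ reduce to exactly the same zero-counting estimates and to $\Va\sim\phi(q)\log q$.
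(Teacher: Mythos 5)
Your proposal is correct and follows essentially the same route as the paper: reduce to one-dimensional tails via $|x_1|\le\|\vx\|\le\sqrt r\,|\vx|_\infty$ (the paper's Lemma 2.1, which additionally observes that all the centered marginals have the common law of $Y(q)$), then control those tails using exponential moments of $\cos(2\pi\theta)$ (i.e.\ the Bessel function $I_0$) together with the zero-counting estimate $\sum_{0<\gamma_\chi\le T}\rho_\gamma^{-1}\gg\phi(q)\log q\cdot\log T$ and its companion for $\sum_{\gamma_\chi>T}\rho_\gamma^{-2}$, which is precisely the paper's Proposition 2.5 built from Lemmas 2.3 and 2.4. The only real difference is procedural: the paper invokes Theorem 2 of Montgomery--Odlyzko as a black box, feeding it those sums, whereas you propose to unpack it --- Chernoff with $I_0(s)\le e^{s^2/4}$ for the upper tail, and exponential tilting plus Berry--Esseen (or the equivalent change-of-measure argument inside \cite{MO}) for the lower tail. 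Your diagnosis of the crux is exactly right: the hypothesis $V\le A\phi(q)\log q$ is what keeps the optimal tilt $t^*$ bounded (in terms of $A$), and this is enforced by the logarithmic growth of $\sum_{\gamma_\chi\le T}\rho_\gamma^{-1}$. The unpacked version is a bit more self-contained and makes the interpolation between the Gaussian regime ($t^*\to 0$) and the threshold $V\asymp\phi(q)\log q$ visible, at the cost of re-deriving material that \cite{MO} already packages; the paper's version is shorter but leaves that mechanism hidden inside the citation.

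One minor point worth flagging in your write-up: when you bound $t^*\sqrt{\Lambda_{q,a}''(t^*)}$ you should argue the two ends of the range separately. In the quasi-Gaussian regime $t^*\asymp W/\Va$, so $t^*\sqrt{\Lambda''}\asymp W/\sqrt{\phi(q)\log q}\le W^2/(\phi(q)\log q)$ since $W\ge\sqrt{\phi(q)\log q}$; near the upper end $t^*\asymp_A 1$ and $\Lambda''\ll\phi(q)\log q$, so $t^*\sqrt{\Lambda''}\ll_A\sqrt{\phi(q)\log q}\ll W\ll W^2/(\phi(q)\log q)$ because there $W\asymp\phi(q)\log q$. Both cases give what you need, but the single chain ``$\asymp_A W/\sqrt{\phi(q)\log q}$'' is not literally true across the whole interval (e.g.\ at $W=\phi(q)\log q$ one has $t^*\asymp e$, not $\asymp 1$).
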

Using an analogous approach, we prove that (1.6) does not hold when $V/(\phi(q)\log q)\to\infty$ as $q\to\infty$, which shows that a transition occurs at $V\asymp \phi(q)\log q$.

\begin{Thm} Assume GRH and LI. Fix an integer $r\geq 2$ and let $q$ be large. If $V/(\phi(q)\log q)\to\infty$ and $V/(\phi(q)\log^2 q)\to 0$ as $q\to\infty$, then for all distinct reduced residues $a_1,\dots,a_r$ modulo $q$, we have
$$ \exp\left(-c_4(r) \frac{V^2}{\phi(q)\log q }\exp\left(c_6(r)\frac{V}{\phi(q)\log q}\right)\right) \ll  \mu_{q;a_1,\dots,a_r}(||\vx||>V)$$
and
$$ \mu_{q;a_1,\dots,a_r}(||\vx||>V) \ll \exp\left(-c_3(r) \frac{V^2}{\phi(q)\log q}\exp\left(c_5(r)\frac{V}{\phi(q)\log q}\right)\right),$$
where $c_4(r)>c_3(r)$, and $c_6(r)>c_5(r)$ are positive numbers which depend only on $r$.
\end{Thm}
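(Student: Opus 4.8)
\smallskip
\noindent\textbf{Proof proposal.} The plan is to work entirely inside the probabilistic model. Put $Y_a:=X(q,a)+C_q(a)=\sum_{\chi\neq\chi_0}\sum_{\gamma_\chi>0}2\,\mathrm{Re}(\chi(a)U(\gamma_\chi))/\sqrt{1/4+\gamma_\chi^2}$. Since each $U(\gamma_\chi)$ is invariant under rotation and the family $\{U(\gamma_\chi)\}$ is independent over all pairs $(\chi,\gamma_\chi)$, the law of $Y_a$ does not depend on $a$ and is symmetric about $0$; write $Y$ for a variable with this law. Using $|\vx|_{\infty}\le||\vx||\le\sqrt r\,|\vx|_{\infty}$, the bound $|C_q(a)|\ll q^{\epsilon}=o(V)$, and $-Y\overset{d}{=}Y$, the two asserted estimates for $\mu_{q;a_1,\dots,a_r}(||\vx||>V)$ reduce to matching upper and lower bounds for $\pr(Y>V)$, uniformly in $a_1,\dots,a_r$ and up to replacing $V$ by a fixed multiple of itself. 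That replacement is harmless for the stated conclusion: with $W:=V/(\phi(q)\log q)\to\infty$ one has $W^{2}e^{aW}\le e^{a'W}$ for $q$ large whenever $a'>a$, and this is precisely what lets one pass freely between the equivalent shapes ``$\tfrac{V^2}{\phi(q)\log q}e^{cW}$'', ``$Ve^{cW}$'' and ``$\phi(q)\log q\cdot e^{cW}$'' of the bounds, given the slack $c_6>c_5$ and $c_4>c_3$. The two facts about zeros I would need, uniformly for $1\le T\le q$, are
\[
\Sigma_q(T):=\sum_{\chi\neq\chi_0}\sum_{0<\gamma_\chi\le T}\frac{2}{\sqrt{1/4+\gamma_\chi^2}}\asymp\phi(q)\log q\,(1+\log T),\qquad N_q(T):=\sum_{\chi\neq\chi_0}\#\{\gamma_\chi:0<\gamma_\chi\le T\}\ll\phi(q)\,T\log(2qT).
\]

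For the upper bound I would use the Laplace transform, in the spirit of \cite{MO} and of the proof of Theorem 2. Since $\ex[e^{t\,\mathrm{Re}(\zeta U)}]=I_0(t)$ for every $|\zeta|=1$, for each $s>0$
\[
\pr(Y>V)\le e^{-sV}\,\ex[e^{sY}]=e^{-sV}\prod_{\chi\neq\chi_0}\prod_{\gamma_\chi>0}I_0\!\Big(\frac{2s}{\sqrt{1/4+\gamma_\chi^2}}\Big).
\]
Bounding $\log I_0(x)\le x^2/4$ on the factors with $\gamma_\chi>2s$ and $\log I_0(x)\le x$ on the rest gives $\sum\log I_0\le s^2\sum_{\gamma_\chi>2s}(1/4+\gamma_\chi^2)^{-1}+s\,\Sigma_q(2s)\ll s\,\phi(q)\log(2qs)+s\,\phi(q)\log q\,\log(2s)\ll s\,\phi(q)\log q\,\log(2s)$ for $4\le s\le q$. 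Hence $\log\pr(Y>V)\le -sV+O\!\big(s\,\phi(q)\log q\,\log(2s)\big)$. Taking $s=\exp\!\big(V/(A\,\phi(q)\log q)\big)$ with $A$ a large absolute constant — a value lying in $[4,q]$ precisely because $\phi(q)\log q\ll V=o(\phi(q)\log^2 q)$ — makes the error term at most $\tfrac12 sV$, so $\pr(Y>V)\le\exp(-\tfrac12 sV)$, the desired upper bound.

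For the lower bound I would condition the lowest--lying zeros to point in the favourable direction. Let $T$ be the least value with $\Sigma_q(T)\ge3V$; then $\Sigma_q(T)\le3V+O(1)\le4V$, the estimate for $\Sigma_q$ forces $\log T\asymp V/(\phi(q)\log q)$, so $T\le q$ (again using $V=o(\phi(q)\log^2 q)$) and $N_q(T)\ll\phi(q)\log q\cdot\exp\!\big(O(V/(\phi(q)\log q))\big)$. Fix a small $\delta>0$ and let $\mathcal E$ be the event that $\mathrm{Re}(\chi(a_1)U(\gamma_\chi))\ge1-\delta$ for every pair $(\chi,\gamma_\chi)$ with $0<\gamma_\chi\le T$; these events are independent and each has probability bounded below, so $\pr(\mathcal E)\ge\delta_0^{N_q(T)}$ for some $\delta_0=\delta_0(\delta)>0$. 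On $\mathcal E$ we have $Y\ge(1-\delta)\Sigma_q(T)+R\ge3(1-\delta)V+R$, where $R:=\sum_{\gamma_\chi>T}2\,\mathrm{Re}(\chi(a_1)U(\gamma_\chi))/\sqrt{1/4+\gamma_\chi^2}$ is independent of $\mathcal E$, has mean $0$, and variance at most $\Va\ll\phi(q)\log q=o(V^2)$; so $\pr(R<-V)=o(1)$ by Chebyshev. Choosing $\delta<1/3$, we get $Y>V$ on $\mathcal E\cap\{R\ge-V\}$, an event of probability $\ge\tfrac12\delta_0^{N_q(T)}$, whence $\pr(Y>V)\gg\exp\!\big(-N_q(T)\log(1/\delta_0)\big)\ge\exp\!\big(-a\,\phi(q)\log q\,e^{bW}\big)$, the desired lower bound.

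The main obstacle is establishing the two zero--estimates uniformly in $q$ and out to $T$ as large as $q^{o(1)}$. Their upper parts follow from the Riemann--von Mangoldt formula summed over the primitive characters of conductor dividing $q$, using $d(q)\ll q^{\epsilon}$. For the lower bound $\Sigma_q(T)\gg\phi(q)\log q$ at an absolute value of $T$ I would combine $\Va\sim\phi(q)\log q$ (already known) with the crude tail estimate $\sum_{\chi}\sum_{\gamma_\chi>T_0}(1/4+\gamma_\chi^2)^{-1}\ll\phi(q)\log q/T_0$, and then propagate it to larger $T\le q$ by partial summation against a lower bound for $\sum_\chi N(t,\chi)$, which becomes available once $t$ exceeds an absolute constant; at this step one must descend to primitive characters and observe that all but $o(\phi(q))$ of the characters modulo $q$ have conductor larger than $\sqrt q$. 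Granting these estimates, the Laplace--transform optimisation and the conditioning argument are routine; the one remaining point of vigilance is to keep $s$ and $T$ inside the admissible window $[\,\mathrm{const},q\,]$, which is exactly what the hypotheses $V/(\phi(q)\log q)\to\infty$ and $V/(\phi(q)\log^2 q)\to0$ guarantee.
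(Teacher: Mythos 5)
Your proposal is correct and follows the paper's overall strategy: reduce $\mu_{q;a_1,\dots,a_r}(||\vx||>V)$ to two-sided tail bounds for the symmetric single variable $Y(q)$ via a norm-comparison step (this is Lemma 2.1), use the zero-counting estimates $\sum_{\chi\neq\chi_0}\sum_{0<\gamma_\chi\leq T}(1/4+\gamma_\chi^2)^{-1/2}\asymp\phi(q)\log q\log T$ and $\sum_{\chi\neq\chi_0}\sum_{\gamma_\chi>T}(1/4+\gamma_\chi^2)^{-1}\sim\phi(q)\log q/(2\pi T)$ valid for $T=q^{o(1)}$ (this is Proposition 2.5), and then optimise with a cutoff of size $\exp\left(cV/(\phi(q)\log q)\right)$, which the hypothesis $V/(\phi(q)\log^2 q)\to 0$ keeps inside the admissible window. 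The one structural difference is that the paper feeds these estimates into the Montgomery--Odlyzko large-deviation bound (Theorem 2.2, quoted from \cite{MO}) as a black box, choosing $T=\exp(50V/(\phi(q)\log q))$ for the lower bound and $T=\exp(V/(10\phi(q)\log q))$ for the upper bound, whereas you re-derive both halves of that theorem in situ: the Laplace-transform bound via $\log I_0$ for the upper tail, and conditioning of the low-lying zeros plus a Chebyshev bound on the remainder for the lower tail. Both are correct; the paper's version is shorter because it reuses \cite{MO}, while yours is self-contained and makes the dependence of the constants on the zero-counting explicit. The zero-counting facts you flag as needing a uniform proof are precisely Lemmas 2.3, 2.4 and Proposition 2.5 of the paper, so there is no actual gap.
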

In the range $V/(\phi(q)\log^2q)\to\infty$ one can prove, using the same ideas, that there are positive constants $c_8(r)>c_7(r)$ such that
\begin{equation}
\exp\left(-\exp\left(c_8(r)\sqrt{\frac{V}{\phi(q)}}\right)\right) \leq\mu_{q;a_1,\dots,a_r}(||\vx||>V)\leq \exp\left(-\exp\left(c_7(r)\sqrt{\frac{V}{\phi(q)}}\right)\right).
\end{equation}
In particular, these bounds show that the asymptotic behavior of the tail $\mu_{q;a_1,\dots,a_r}(||\vx||>V)$ changes again at $V\asymp \phi(q)\log^2 q.$
Assuming the RH and using the LI for the Riemann zeta function, H. L. Montgomery \cite{Mont} had previously obtained a similar result for $\mu_1$, the limiting distribution of the error term in the prime number theorem  $\pi(x)-\text{Li}(x)$. His result states that
$$ \exp\left(-c_{10}\sqrt{V}\exp\left(\sqrt{2\pi V}\right)\right)\leq \mu_1(|x|>V)\leq \exp\left(-c_9\sqrt{V}\exp\left(\sqrt{2\pi V}\right)\right),$$ for some absolute constants $c_{10}>c_{9}>0$. A more precise estimate was subsequently derived by W. Monach \cite{Mona}, namely
 \begin{equation}
  \mu_1(|x|>V)= \exp\left(-(e^{-A_0}+o(1))\sqrt{2\pi V}\exp\left(\sqrt{2\pi V}\right)\right),
 \end{equation}
 where $A_0$ is an absolute constant defined in Theorem 4 below.

In our case, it appears that $\mu_{q;a_1,\dots, a_r}(|\vx|_{\infty}>V)$ is more natural to study, if one wants to gain a better understanding of the decay rate of large deviations of $\mu_{q;a_1,\dots,a_r}$ in the range $V/(\phi(q)\log^2 q)\to\infty$. We achieved this using the saddle-point method. We also note that in contrast to our previous results, $r$ can vary uniformly in $[2,\phi(q)-1]$ as $q\to\infty$.
\begin{Thm} Assume GRH and LI. Let $q$ be large, and $2\leq r\leq \phi(q)-1$ be an integer. If $V/(\phi(q)\log^2 q)\to\infty$ as $q\to\infty$, then for all distinct reduced residue classes $a_1,\dots,a_r$ modulo $q$, the tail $\mu_{q;a_1,\dots, a_r}(|\vx|_{\infty}>V)$ equals
$$
\exp\left(-e^{-L(q)}\sqrt{\frac{2(\phi(q)-1) V}{\pi}}\exp\left(\sqrt{L(q)^2+\frac{2\pi V}{\phi(q)-1}}\right)\left(1+ O\left(\left(\frac{\phi(q)\log^2q}{V}\right)^{1/4}\right)\right)\right),$$
where $$L(q)=\frac{\phi(q)}{\phi(q)-1}\left(\log q -\sum_{p|q}\frac{\log p}{p-1}\right)+A_0-\log\pi,$$
and
$$A_0:=\int_{0}^{1}\frac{\log I_0(t)}{t^2}dt+\int_{1}^{\infty}\frac{\log I_0(t)-t}{t^2}dt+1=1,2977474,$$
where $I_0(t)=\sum_{n=0}^{\infty} (t/2)^{2n}/n!^2$ is the modified Bessel function of order $0$.
\end{Thm}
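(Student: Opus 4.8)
The plan is to collapse the $r$-dimensional tail to the one-dimensional tail of a single coordinate and then evaluate that tail by the saddle-point method.

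\emph{Reduction and set-up.} Since $|\vx|_{\infty}>V$ forces $|X(q,a_i)|>V$ for some $i$, the tail lies between $\pr(X(q,a_1)>V)$ and $\sum_{i=1}^{r}\big(\pr(X(q,a_i)>V)+\pr(X(q,a_i)<-V)\big)$. In the random model (1.3) every $2\,\mathrm{Re}(\chi(a)U(\gamma_\chi))$ has the law of $2\cos\Theta$ with $\Theta$ uniform, so $\pr(X(q,a)>V)$ is independent of $a$ up to replacing $V$ by $V+O_\epsilon(q^\epsilon)$, while applying $U(\gamma_\chi)\mapsto-U(\gamma_\chi)$ shows the law of $X(q,a)$ is symmetric about $-C_q(a)$, hence $\pr(X(q,a)<-V)=\pr(X(q,a)>V+O_\epsilon(q^\epsilon))$. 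Thus it suffices to pin down $\pr(X(q,a)>V)$ for a single $a$: the factor $2r\le2\phi(q)$ and the shift $V\mapsto V+O(q^\epsilon)$ are harmless because the main exponent will be $\gg q$ in the range $V/(\phi(q)\log^2q)\to\infty$, and this also makes everything uniform for $2\le r\le\phi(q)-1$. From $\ex[e^{t\cdot2\,\mathrm{Re}(\chi(a)U(\gamma))}]=I_0(2t)$ one gets, with $w_\gamma=\sqrt{\tfrac14+\gamma^2}$,
\[
e^{F(s)}:=\ex[e^{sX(q,a)}]=e^{-sC_q(a)}\prod_{\chi\neq\chi_0}\prod_{\gamma_\chi>0}I_0\!\Big(\tfrac{2s}{w_{\gamma_\chi}}\Big),
\]
finite on $\mathbb R$ by (1.2), and from $\pr(X(q,a)>V)=\tfrac1{2\pi i}\int_{(\sigma)}e^{F(z)-zV}\tfrac{dz}{z}$ I would take the real saddle $\sigma=\sigma(V)\to\infty$ with $F'(\sigma)=V$ and aim for $\pr(X(q,a)>V)=\tfrac{e^{F(\sigma)-\sigma V}}{\sigma\sqrt{2\pi F''(\sigma)}}(1+o(1))$. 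The delicate input is the localization of the contour at $\sigma$: each factor of $\prod_\gamma|I_0(2(\sigma+it)/w_\gamma)|/I_0(2\sigma/w_\gamma)$ is $\le1$ but close to $1$, so one uses the Montgomery--Odlyzko information on $\{\gamma_\chi\}$ to force enough of them away from $1$ --- a quantitative local CLT for the $\sigma$-tilt of $X(q,a)$ --- and it is here that the error $O\big((\phi(q)\log^2q/V)^{1/4}\big)$ is ultimately produced. Since $\log\big(\sigma\sqrt{2\pi F''(\sigma)}\big)=O(\log\sigma)=O(\sqrt{V/\phi(q)})$ is negligible beside $|F(\sigma)-\sigma V|$, the exponent in the theorem is $F(\sigma)-\sigma V$ to that relative accuracy.

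\emph{Evaluation of $F$, $F'$ and the saddle.} Summing the Riemann--von Mangoldt formula for the $L(s,\chi^*)$ over the primitive characters inducing the $\chi\bmod q$, the density of ordinates $\gamma_\chi>0$ (summed over $\chi\neq\chi_0\bmod q$) near height $t$ is $\tfrac{\phi(q)-1}{2\pi}\log\tfrac{\mathcal Q t}{2\pi}$, where $\log\mathcal Q:=\tfrac1{\phi(q)-1}\sum_{\chi\neq\chi_0}\log\mathfrak q(\chi)=\tfrac{\phi(q)}{\phi(q)-1}\big(\log q-\sum_{p\mid q}\tfrac{\log p}{p-1}\big)$ --- so $\log\mathcal Q$ accounts for the first part of $L(q)$ --- while ordinates within $O(1/q)$ of the real axis, where this fails, contribute only $O(\phi(q)\sigma)$ and are lower order. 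Plugging this into
\[
F(s)-sF'(s)=\tfrac{\phi(q)-1}{2\pi}\int_0^\infty g\!\big(2s/w(t)\big)\log\tfrac{\mathcal Q t}{2\pi}\,dt,\qquad g(u):=\log I_0(u)-u(\log I_0)'(u),
\]
and substituting $u=2s/\sqrt{\tfrac14+t^2}$ (so $t\sim2s/u$ and $\log\tfrac{\mathcal Q t}{2\pi}\sim\log\tfrac{\mathcal Q s}{\pi}-\log u$), the computation is governed by the two identities $\int_0^U\tfrac{g(u)}{u^2}\,du=-\tfrac{\log I_0(U)}{U}\to-1$ and $\int_0^\infty\tfrac{g(u)\log u}{u^2}\,du=A_0-1$ --- exactly the integrals defining $A_0$, since $\int_0^1\tfrac{\log I_0(t)}{t^2}\,dt+\int_1^\infty\tfrac{\log I_0(t)-t}{t^2}\,dt=A_0-1$. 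This yields $F(s)-sF'(s)=-\tfrac{\phi(q)-1}{\pi}s\big(\log s+L(q)-1+o(1)\big)$, and the same computation with $(\log I_0)'$ in place of $g$ gives $F'(s)=\tfrac{\phi(q)-1}{2\pi}\big((\log s+L(q))^2-L(q)^2+O(\log s)\big)$ and a companion formula for $F''(s)$. Solving $F'(\sigma)=V$ then gives $\sigma=e^{-L(q)}\exp\!\big(\sqrt{L(q)^2+\tfrac{2\pi V}{\phi(q)-1}}\big)(1+o(1))$, and substituting into $F(\sigma)-\sigma V=-\tfrac{\phi(q)-1}{\pi}\sigma\big(\log\sigma+L(q)-1\big)$, together with $\tfrac{\phi(q)-1}{\pi}\sqrt{\tfrac{2\pi V}{\phi(q)-1}}=\sqrt{\tfrac{2(\phi(q)-1)V}{\pi}}$, produces exactly the main term in the statement.

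\emph{Errors and the hard part.} One then checks that the claimed error dominates all remaining slack: the $o(1)$'s from truncating $\int_0^{4s}$ at a finite point and from $\tfrac{\log I_0(4s)}{4s}-1$ are power-savings in $\sigma\asymp e^{\sqrt{V/\phi(q)}}$; the zero-counting errors, including the cumulative $\sum_\chi S(t,\chi)$, are damped by the rapid decay of $\tfrac{d}{dt}\big(\tfrac2{w(t)}(\log I_0)'(2s/w(t))\big)$ and cost only a relative $O(\sqrt{\phi(q)/V})$ or better; the factor $2r$ and the shift $V\mapsto V+O(q^\epsilon)$ move the exponent by $O(\log q+q^\epsilon\phi(q)/V)$, tiny next to the main term; and the governing $O\big((\phi(q)\log^2q/V)^{1/4}\big)$ comes from the local-limit step above together with retaining the exact $\sqrt{L(q)^2+\tfrac{2\pi V}{\phi(q)-1}}$ (rather than its leading term) inside the exponential. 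I expect the two genuinely hard points to be establishing the saddle-point asymptotic with an error that is \emph{uniform} in $q$ (equivalently, a uniform local central limit theorem for the $\sigma$-tilted law, which is exactly what the Montgomery--Odlyzko estimates on the ordinates $\{\gamma_\chi\}$ deliver) and controlling the accumulated $\sum_\chi S(t,\chi)$ uniformly in $q$, since these errors enter $F$, $F'$ and $F''$ at once; the extra flexibility that $r$ may grow with $q$ is, by contrast, essentially free, since $r$ intervenes only through the harmless union-bound factor.
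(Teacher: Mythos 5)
Your roadmap lands on the same asymptotic structure as the paper: reduce to a single coordinate (essentially the paper's Lemma 4.1), write the moment generating function as a product of $I_0$'s over the zeros, approximate the sums over $\{\gamma_\chi\}$ by the zero-counting density $\tfrac{\phi(q)-1}{2\pi}\log\tfrac{\mathcal{Q}t}{2\pi}$ (the paper's Lemma 2.4/Proposition 4.3), and extract $L(q)$ via exactly the two integral identities $\int_0^\infty g(u)u^{-2}\,du=-1$ and $\int_0^\infty g(u)u^{-2}\log u\,du=A_0-1$. Your constants and the relation between $\sigma$, $V$ and $L(q)$ all check out, and the recognition that the saddle-point \emph{prefactor} is logarithmically negligible is correct.

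Where you diverge, and where a genuine gap sits, is the inversion step. You propose the full contour-integral saddle-point method with a local central limit theorem for the $\sigma$-tilted law, and you claim that the Montgomery--Odlyzko estimates ``deliver'' this uniformly in $q$. They do not: Theorem~2 of \cite{MO} is a pair of large-deviation tail bounds (essentially Chernoff-type upper and lower bounds), not a local CLT, and it gives no control whatsoever over $\prod_\gamma |I_0(2(\sigma+it)/w_\gamma)|/I_0(2\sigma/w_\gamma)$ off the real axis. Establishing a local CLT uniformly in $q$ would require a separate decay estimate on this product, which your sketch does not supply. The paper sidesteps this entirely: after computing $\log\mathcal{L}(s)$ (Proposition 4.3), it compares $\mathcal{L}\big((1\pm\epsilon)s\big)$ to $\mathcal{L}(s)$ to show that the integrals $I_1,I_2$ over $t>V(1+\delta)$ and $t<V(1-\delta)$ are exponentially small, and then uses monotonicity of $\rho_q$ to sandwich $\rho_q(V(1\pm\delta))$. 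This perturbation argument is also exactly where the $(\phi(q)\log^2 q/V)^{1/4}$ error originates --- from taking $\epsilon\asymp\sqrt{\log q/\log s}$ and $\delta\asymp\epsilon/\log s$ --- so your attribution of that error to the local-limit step does not match the paper's mechanism. If you want to stay with your framework, replace the local CLT by either the paper's $\epsilon,\delta$-perturbation argument or an exponential tilt plus Chebyshev (second moment of the tilted law), both of which need only $F$, $F'$ and $F''$ on the real axis and would close the gap.
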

Lastly, we should also mention that in the range $V\geq \sqrt{\phi(q)}\log q$ one may allow $r$ to vary uniformly, as in Theorem 4, if one is willing to replace $\mu_{q;a_1,\dots, a_r}(||\vx||>V)$ by $\mu_{q;a_1,\dots, a_r}(|\vx|_{\infty}>V)$ in the statements of Theorems 2 and 3.

\section{The intermediate range: Proof of Theorems 2 and 3}

In this section we investigate the behavior of the tail $\mu_{q;a_1,\dots,a_r}(||x||>V)$ when $V\gg \sqrt{\phi(q)\log q}$ as long as $V/(\phi(q)\log^2q)\to 0$ as $q\to\infty$. Recall that $\mu_{q;a_1,\dots,a_r}$ is also the probability measure corresponding to the random vector $X_{q;a_1,\dots,a_r}$ (defined in (1.3)). Our idea starts with the observation that the random variables
\begin{equation} Y(q,a):=X(q,a)-\ex(X(q,a))= \sum_{\substack{\chi\neq \chi_0\\ \chi\bmod q}}\sum_{\gamma_{\chi}>0} \frac{2\text{Re}(\chi(a)U(\gamma_{\chi}))}{\sqrt{\frac14+\gamma_{\chi}^2}},
\end{equation}
are identically distributed for all reduced residues $a$ modulo $q$. Indeed, for all $(a,q)=1$ the random variables $\{\tilde{U}(\gamma_{\chi})\}_{\gamma_{\chi}\in S}$, where $\tilde{U}(\gamma_{\chi})= \chi(a)U(\gamma_{\chi})$, are independent and uniformly distributed on the unit circle. Hence, if $(a,q)=1$ then  $Y(q,a)$ has the same distribution as
\begin{equation}
Y(q):= \sum_{\substack{\chi\neq \chi_0\\ \chi\bmod q}}\sum_{\gamma_{\chi}>0} \frac{2\cos(2\pi\theta(\gamma_{\chi}))}{\sqrt{\frac14+\gamma_{\chi}^2}},
\end{equation}
where $\{\theta(\gamma_{\chi})\}_{\gamma_{\chi}\in S}$ are independent random variables uniformly distributed on $[0,1]$. Our first lemma shows, in our range of $V$, that large deviations of $\mu_{q;a_1,\dots,a_r}$ are closely related to those of $Y(q)$.
\begin{lem} The random variable $Y(q)$ is symmetric. Moreover, if $q$ is sufficiently large, $r\geq 2$ is fixed and $V\geq \sqrt{\phi(q)}$, then for all distinct reduced residues $a_1,\dots,a_r$ modulo $q$ we have
$$
 \pr(Y(q)> 2V)\leq \mu_{q;a_1,\dots,a_r}(||x||>V) \leq 2r \pr\left(Y(q)> \frac{V}{2\sqrt{r}}\right).
$$
\end{lem}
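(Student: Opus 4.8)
The plan is to transfer the whole question to the single one‑dimensional random variable $Y(q)$, using three ingredients that are already available: the distributional identity $Y(q,a)\stackrel{d}{=}Y(q)$ (established in the paragraph preceding the lemma), the relation $X(q,a)=Y(q,a)-C_q(a)$ coming from $\ex(X(q,a))=-C_q(a)$, and the elementary bound $-1\le C_q(a)<d(q)\ll_{\epsilon}q^{\epsilon}$. Since $V\ge\sqrt{\phi(q)}$ and $\phi(q)\gg q/\log\log q$, we get $\max_{1\le j\le r}|C_q(a_j)|\le V/(2\sqrt{r})$ once $q$ is large; this single inequality is the only use of the hypothesis $V\ge\sqrt{\phi(q)}$, and it is what makes the mean shifts $C_q(a_j)$ harmless. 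Throughout I would use the Rubinstein--Sarnak identification $\mu_{q;a_1,\dots,a_r}(||x||>V)=\pr(||X_{q;a_1,\dots,a_r}||>V)$.

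The symmetry of $Y(q)$ is quick: the series (2.2) converges almost surely because its independent, mean‑zero summands have variances summing to $\Va<\infty$, and the measure‑preserving shift $\theta\mapsto\theta+\tfrac12\pmod 1$ on $[0,1]$ replaces every $\cos(2\pi\theta(\gamma_\chi))$ by its negative simultaneously, so $-Y(q)\stackrel{d}{=}Y(q)$; in particular $\pr(Y(q)<-t)=\pr(Y(q)>t)$ for all $t$. For the lower bound I would simply discard all but the first coordinate: $||X_{q;a_1,\dots,a_r}||\ge|X(q,a_1)|\ge X(q,a_1)=Y(q,a_1)-C_q(a_1)$, and since $C_q(a_1)\le V$ for large $q$ the event $\{Y(q,a_1)>2V\}$ is contained in $\{||X_{q;a_1,\dots,a_r}||>V\}$. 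Taking probabilities and invoking $Y(q,a_1)\stackrel{d}{=}Y(q)$ yields $\pr(Y(q)>2V)\le\mu_{q;a_1,\dots,a_r}(||x||>V)$.

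For the upper bound I would pigeonhole: if $||X_{q;a_1,\dots,a_r}||>V$ then $|X(q,a_j)|>V/\sqrt{r}$ for some $j$, so the union bound gives $\mu_{q;a_1,\dots,a_r}(||x||>V)\le\sum_{j=1}^{r}\pr(|X(q,a_j)|>V/\sqrt{r})$. Fixing $j$ and writing $Y(q,a_j)=X(q,a_j)+C_q(a_j)$ with $-1\le C_q(a_j)\le V/(2\sqrt{r})$, one checks $\{X(q,a_j)>V/\sqrt{r}\}\subseteq\{Y(q,a_j)>V/(2\sqrt{r})\}$ and $\{X(q,a_j)<-V/\sqrt{r}\}\subseteq\{Y(q,a_j)<-V/(2\sqrt{r})\}$; by the symmetry of $Y(q)$ together with $Y(q,a_j)\stackrel{d}{=}Y(q)$, each of these two events has probability at most $\pr(Y(q)>V/(2\sqrt{r}))$, so $\pr(|X(q,a_j)|>V/\sqrt{r})\le 2\pr(Y(q)>V/(2\sqrt{r}))$. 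Summing over $1\le j\le r$ gives the right‑hand inequality.

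I do not anticipate a genuine obstacle: the argument is pure bookkeeping. The only point that needs a little care — and the reason the explicit slack factors $2$ and $\sqrt{r}$ appear in the statement — is to keep those factors generous enough to absorb simultaneously the mean shifts $C_q(a_j)$ (controlled by $V\ge\sqrt{\phi(q)}\gg q^{1/2-\epsilon}\gg d(q)$ for $q$ large) and the $\ell^{\infty}$‑versus‑$\ell^{2}$ loss incurred by pigeonholing among $r$ coordinates. One should also be careful to record the almost‑sure convergence of the series defining $Y(q)$ and the $X(q,a)$, so that all the distributional identities used above are legitimate.
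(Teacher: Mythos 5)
Your proposal is correct and follows essentially the same route as the paper: lower bound by restricting to the first coordinate, upper bound by pigeonholing on the sup-norm and applying a union bound, absorbing the mean shifts $C_q(a_j)=q^{o(1)}$ into the slack factors via $V\ge\sqrt{\phi(q)}$, and using symmetry of $Y(q)$ to collapse the two tails. The only cosmetic difference is that you establish the symmetry of $Y(q)$ by the measure-preserving shift $\theta\mapsto\theta+\tfrac12$ (so $-Y(q)\overset{d}{=}Y(q)$ directly), whereas the paper observes that the characteristic function $\prod J_0(\cdot)$ is even; both are standard and equivalent.
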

\begin{proof} Note that $\ex(\exp(it\cos(2\pi\theta(\gamma_{\chi}))))=J_0(t)$, where $J_0(t):=\sum_{m=0}^{\infty}(-1)^{m}(t/2)^{2m}/m!^2$ is the Bessel function of order $0$.  Therefore, since the Fourier transform (characteristic function) of $Y(q)$ is
$$ \ex\left(e^{itY(q)}\right)= \prod_{\substack{\chi\neq \chi_0\\ \chi\bmod q}}\prod_{\gamma_{\chi}>0}J_0\left(\frac{2t}{{\sqrt{\frac14+\gamma_{\chi}^2}}}\right),$$
 and $J_0$ is an even function then $Y(q)$ is symmetric.  Now,
$$ \mu_{q;a_1,\dots,a_r}(||x||>V)=\pr(||X_{q;a_1,\dots,a_r}||>V)\geq \pr(X(q,a_1)>V)= \pr(Y(q)> V+C_q(a_1)).$$
The lower bound follows from the fact that $C_q(a_1)< d(q)=q^{o(1)}.$ On the other hand, if $|X(q,a_j)|\leq V/\sqrt{r}$ for all $1\leq j\leq r$, then $||X_{q;a_1,\dots,a_r}||\leq V$. Hence
$$ \mu_{q;a_1,\dots,a_r}(||x||>V)\leq \sum_{j=1}^r\pr\left(|X(q,a_j)|> \frac{V}{\sqrt{r}}\right)\leq \sum_{j=1}^r\pr\left(|Y(q)|> \frac{V}{\sqrt{r}}-|C_q(a_j)|\right).$$
Using that $V\geq \sqrt{\phi(q)}$, $|C_q(a)|=q^{o(1)}$ for all $(a,q)=1$, and $Y(q)$ is symmetric we obtain from the last inequality
$$ \mu_{q;a_1,\dots,a_r}(||x||>V)\leq r\pr\left(|Y(q)|>\frac{V}{2\sqrt{r}}\right)= 2r\pr\left(Y(q)>\frac{V}{2\sqrt{r}}\right), $$
if $q$ is sufficiently large. This establishes the lemma.

\end{proof}
To investigate large deviations of $Y(q)$, we shall appeal to the following result of H. L. Montgomery and A. M. Odlyzko \cite{MO}.
\begin{thm}[Theorem 2 of \cite{MO}] Let $\{Y_n\}_{n\geq 1}$ be a sequence of independent real valued random variables such that $\ex(Y_n)=0$ and $|Y_n|\leq 1$. Suppose there is constant $c>0$ such that $\ex(Y_n^2)\geq c$ for all $n$. Put $Y=\sum_{n\geq 1} Y_n/r_n$ where $\sum_{n\geq 1}1/r_n^2<\infty.$
If $\sum_{|r_n|\leq T}|r_n|^{-1}\leq V/2$, then
\begin{equation}
\pr(Y\geq V)\leq \exp\left(-\frac{1}{16}V^2\left(\sum_{|r_n|>T}r_n^{-2}\right)^{-1}\right).
\end{equation}
Moreover, if $\sum_{|r_n|\leq T}|r_n|^{-1}\geq  2V$, then
\begin{equation}
\pr(Y\geq V)\geq a_1\exp\left(-a_2V^2\left(\sum_{|r_n|>T}r_n^{-2}\right)^{-1}\right),
\end{equation}
where $a_1, a_2>0$ depend only on $c$.
\end{thm}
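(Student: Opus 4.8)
The plan is to treat the upper bound by a truncation followed by a Chernoff estimate, and the lower bound by a truncation followed by a large-deviation lower bound for the ``heavy'' part of the series. Both rest on the following decomposition: fix $T$ and write $Y=Y_{\le}+Y_{>}$, where $Y_{\le}=\sum_{|r_n|\le T}Y_n/r_n$ collects the heavy terms and $Y_{>}=\sum_{|r_n|>T}Y_n/r_n$ the light ones; set $\sigma_T^{2}:=\sum_{|r_n|>T}r_n^{-2}$, so that $c\,\sigma_T^{2}\le\ex(Y_{>}^{2})\le\sigma_T^{2}$, and note that $Y_{\le}$ and $Y_{>}$ are independent.

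\emph{Upper bound.} Under the hypothesis $\sum_{|r_n|\le T}|r_n|^{-1}\le V/2$ one has the deterministic bound $|Y_{\le}|\le V/2$, so $\{Y\ge V\}\subseteq\{Y_{>}\ge V/2\}$. Since each $Y_n$ has mean zero and $|Y_n|\le1$, Hoeffding's lemma gives $\ex(e^{tY_n/r_n})\le e^{t^{2}/(2r_n^{2})}$ for every real $t$, hence $\ex(e^{tY_{>}})\le e^{t^{2}\sigma_T^{2}/2}$; applying Markov's inequality and choosing $t=V/(2\sigma_T^{2})$ then yields $\pr(Y\ge V)\le\pr(Y_{>}\ge V/2)\le\exp(-V^{2}/(8\sigma_T^{2}))\le\exp(-V^{2}/(16\sigma_T^{2}))$. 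Only $\ex(Y_n)=0$ and $|Y_n|\le1$ are used in this half.

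\emph{Lower bound.} Now assume $\sum_{|r_n|\le T}|r_n|^{-1}\ge2V$. By Chebyshev, $\pr(Y_{>}\ge-\sqrt2\,\sigma_T)\ge\tfrac12$, so by independence $\pr(Y\ge V)\ge\tfrac12\,\pr(Y_{\le}\ge W)$, where $W:=V+\sqrt2\,\sigma_T$; since (in the main case, with $V/\sigma_T$ large) $W\le2V\le\sum_{|r_n|\le T}|r_n|^{-1}$, it remains to establish a large-deviation lower bound for the heavy sum, $\pr(Y_{\le}\ge W)\gg_c\exp(-a_2V^{2}/\sigma_T^{2})$. (The range in which $V/\sigma_T$ is bounded, where the asserted bound is $\asymp_c1$, is dealt with separately using standard facts about sums of independent bounded random variables.) I would prove the heavy-sum bound by an exponential change of measure: writing $\psi(t)=\log\ex(e^{tY_{\le}})=\sum_{|r_n|\le T}\kappa_n(t/r_n)$ with $\kappa_n(u)=\log\ex(e^{uY_n})$, I would pick $t^{*}>0$ with $\psi'(t^{*})=W+2\sqrt{\psi''(t^{*})}$ — possible because, by the $\ell^{1}$-hypothesis, $\psi'$ increases past $W$ — and invoke the tilting inequality
$$\pr(Y_{\le}\ge W)\ \ge\ \tfrac34\exp\!\Bigl(\psi(t^{*})-t^{*}W-2t^{*}\sqrt{\psi''(t^{*})}\Bigr),$$
the window probability under the tilted law being at least $\tfrac34$ by Chebyshev, since that law gives $Y_{\le}$ mean $\psi'(t^{*})$ and variance $\psi''(t^{*})$. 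Equivalently, one can replace the tilt by an explicit favorable event and use the anticoncentration consequence of the hypotheses (there is $\delta=\delta(c)>0$ with $\pr(Y_n\ge\delta)\ge\delta$ and $\pr(Y_n\le-\delta)\ge\delta$ for every $n$): constrain a suitably chosen sub-collection of the heavy variables to lie near the favorable edge of their supports so that their combined contribution to $Y_{\le}$ exceeds $W$, and bound the probability of this configuration below by the product of the individual constraint probabilities.

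\emph{Main obstacle.} The genuine work is confined to this last step: showing that $-(\psi(t^{*})-t^{*}\psi'(t^{*}))=\int_{0}^{t^{*}}s\,\psi''(s)\,ds$ and $t^{*}\sqrt{\psi''(t^{*})}$ are both $O_{c}(V^{2}/\sigma_T^{2})$, equivalently (in the construction language) that the number of heavy variables one is forced to constrain is $O_{c}(V^{2}/\sigma_T^{2})$. This cannot be read off a Gaussian approximation, because the heavy sum is genuinely non-Gaussian — its terms are not small and $\textup{Var}(Y_{\le})$ need not be comparable to $\sigma_T^{2}$. Instead one would exploit the monotonicity of the individual cumulant generating functions: $\kappa_n''(u)=\textup{Var}^{(u)}(Y_n)\le1$ for all $u$, while $\kappa_n''(u)\to0$ as $|u|\to\infty$ (the exponentially tilted variable collapses onto an extreme point of its range), together with $\kappa_n''(0)=\textup{Var}(Y_n)\ge c$, which keeps this collapse from happening prematurely. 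It is precisely the interplay between the $\ell^{1}$-mass of the heavy terms, the tail $\ell^{2}$-mass $\sigma_T^{2}$, and the constant $c$ that pins down the size of $t^{*}$, controls $\psi''(t^{*})$, and generates the dependence of $a_1,a_2$ on $c$; everything else is routine Chernoff and Chebyshev bookkeeping.
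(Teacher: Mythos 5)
This statement is Theorem~2 of Montgomery and Odlyzko \cite{MO}; the paper cites it as Theorem~2.2 without reproducing a proof, so there is no ``paper's own proof'' to compare against and your attempt must be judged on its own merits.

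Your upper bound is complete and correct: the deterministic bound $|Y_{\le}|\le\sum_{|r_n|\le T}|r_n|^{-1}\le V/2$, the reduction to $\pr(Y_{>}\ge V/2)$, Hoeffding's lemma, and the Chernoff optimization are exactly the standard route, and you in fact obtain the sharper constant $1/8$ (the constant $1/16$ in the stated theorem corresponds to using the cruder bound $\ex(e^{tY_n})\le e^{t^2}$).

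The lower bound is genuinely incomplete, and the gap is not merely that you deferred bookkeeping: the single claim on which the tilting plan rests is not established and, as stated, is not a consequence of the hypotheses. You write that ``by the $\ell^1$-hypothesis, $\psi'$ increases past $W$,'' but $\psi'(\infty)=\sum_{|r_n|\le T}\mathrm{ess\,sup}(Y_n/r_n)$, and the hypotheses $\ex(Y_n)=0$, $|Y_n|\le1$, $\ex(Y_n^2)\ge c$ only force $\mathrm{ess\,sup}(Y_n/r_n)\ge c\,|r_n|^{-1}$, so all you can deduce is $\psi'(\infty)\ge c\sum_{|r_n|\le T}|r_n|^{-1}\ge 2cV$, which is \emph{less} than $W\approx V$ once $c<1/2$. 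Thus the saddle point $t^*$ with $\psi'(t^*)=W+2\sqrt{\psi''(t^*)}$ need not exist, and the fallback ``constrain a sub-collection to the favorable edge'' construction has the same defect: conditioning the heavy terms on $\mathrm{sign}(r_n)Y_n\ge\delta(c)$ only guarantees a contribution $\gtrsim c\,V$, again short of the target $W$. You flag this yourself as the ``main obstacle'' and correctly observe that the heavy sum is non-Gaussian and that $\kappa_n''$ collapses under large tilts; but identifying where the difficulty lives is not the same as resolving it, and the quantitative assertion that ``the number of heavy variables one is forced to constrain is $O_c(V^2/\sigma_T^2)$'' is never derived from the hypotheses. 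Closing the lower bound requires a concrete mechanism by which the $\ell^1$ hypothesis, together with $\ex(Y_n^2)\ge c$, actually produces the needed deviation $Y_{\le}\ge W$ at the claimed exponential cost --- this is precisely the content of Montgomery and Odlyzko's argument, and it is the piece that is missing here.
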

In order to apply this result to our setting, we have to understand the asymptotic behavior of the sums $\sum_{\chi\neq\chi_0}\sum_{0<\gamma_{\chi}\leq T} 1/(\frac{1}{4}+\gamma_{\chi}^2)^{1/2}$ and $\sum_{\chi\neq\chi_0}\sum_{\gamma_{\chi}> T}1/(\frac{1}{4}+\gamma_{\chi}^2)$.
For a non-trivial character $\chi$ modulo $q$, we let $q^*_{\chi}$ be the conductor of $\chi$, and $\chi^*$ be the unique primitive character modulo $q^*_{\chi}$ which induces $\chi$. We begin by recording some standard estimates which will be useful in our subsequent work.
\begin{lem} Assume GRH. Let $\chi$ be a non-trivial character modulo $q$. Then
$$
\sum_{\gamma_{\chi}>0}\frac{1}{\frac14+ \gamma_{\chi}^2}
= \frac12\log q_{\chi}^*+ O(\log\log q).
$$
Moreover, we have
$$
 \sum_{\chi \bmod q}\log q_{\chi}^*=\phi(q)\left(\log q-\sum_{p|q}\frac{\log p}{p-1}\right),
$$
and
$$
\sum_{p|q}\frac{\log p}{p-1}\ll \log\log q.
$$
\end{lem}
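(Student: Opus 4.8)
For the first identity — the analytic core — I would argue from the Hadamard product together with the functional equation, the only real ingredient being a classical conditional bound on $L'/L(1,\chi^*)$. Given a non-trivial $\chi\bmod q$, let $\chi^*$ be the primitive character of conductor $q_\chi^*$ that induces it; the non-trivial zeros of $L(s,\chi)$ and of $L(s,\chi^*)$ coincide, and under GRH are the $\rho=\frac12+i\gamma_\chi$. With $\mathfrak a=0$ or $1$ according as $\chi(-1)=1$ or $-1$, set $\xi(s,\chi^*)=(q_\chi^*/\pi)^{(s+\mathfrak a)/2}\Gamma(\tfrac{s+\mathfrak a}{2})L(s,\chi^*)$, an entire function whose zeros are exactly the $\rho$. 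Its Hadamard product yields $\frac{\xi'}{\xi}(s,\chi^*)=B(\chi^*)+\sum_\rho(\tfrac{1}{s-\rho}+\tfrac1\rho)$; evaluating at $s=1$ and $s=0$ and subtracting,
\[\frac{\xi'}{\xi}(1,\chi^*)-\frac{\xi'}{\xi}(0,\chi^*)=\sum_\rho\Big(\frac{1}{1-\rho}+\frac1\rho\Big)=\sum_{\gamma_\chi}\frac{1}{\frac14+\gamma_\chi^2},\]
the last equality using GRH (so that $\rho(1-\rho)=\tfrac14+\gamma_\chi^2$). The functional equation $\xi(s,\chi^*)=\varepsilon(\chi^*)\xi(1-s,\bar\chi^*)$ gives $\frac{\xi'}{\xi}(0,\chi^*)=-\frac{\xi'}{\xi}(1,\bar\chi^*)$, so the left-hand side equals $\frac{\xi'}{\xi}(1,\chi^*)+\frac{\xi'}{\xi}(1,\bar\chi^*)$; expanding each via the gamma factor and using $\frac{\Gamma'}{\Gamma}(\tfrac{1+\mathfrak a}{2})=O(1)$ and $\frac{L'}{L}(1,\bar\chi^*)=\overline{\frac{L'}{L}(1,\chi^*)}$, I obtain
\[\sum_{\gamma_\chi}\frac{1}{\frac14+\gamma_\chi^2}=\log q_\chi^*+2\,\text{Re}\,\frac{L'}{L}(1,\chi^*)+O(1).\]
I would then invoke the standard GRH estimate $\frac{L'}{L}(1,\chi^*)\ll\log\log q_\chi^*\ll\log\log q$ (itself following from the same partial-fraction expansion a little to the right of $s=1$, GRH placing all zeros on $\text{Re}\,s=\tfrac12$), reducing the right-hand side to $\log q_\chi^*+O(\log\log q)$. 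Restricting to $\gamma_\chi>0$ halves the sum: for real $\chi$ by the symmetry of the zero set, and in general because the zero-counting functions of $L(s,\chi)$ and $L(s,\bar\chi)$ share the same Riemann--von Mangoldt main term — and in any case this estimate is used in the sequel only after summing over $\chi\bmod q$, where $\sum_{\chi\neq\chi_0}\sum_{\gamma_\chi>0}=\tfrac12\sum_{\chi\neq\chi_0}\sum_{\gamma_\chi}$ holds exactly via $\chi\mapsto\bar\chi$. The only step demanding more than bookkeeping is the conditional bound on $\frac{L'}{L}(1,\chi^*)$.

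The second identity is purely combinatorial. Writing $q=\prod_{p|q}p^{a_p}$, the Chinese Remainder Theorem identifies $\chi\bmod q$ with a tuple $(\chi_p)_{p|q}$, $\chi_p\bmod p^{a_p}$, and the conductor is multiplicative, so $\log q_\chi^*=\sum_p\log q_{\chi_p}^*$ and hence $\sum_{\chi\bmod q}\log q_\chi^*=\sum_{p|q}\tfrac{\phi(q)}{\phi(p^{a_p})}\sum_{\psi\bmod p^{a_p}}\log q_\psi^*$. For a fixed prime power $p^a$, the number of characters modulo $p^a$ of conductor exactly $p^j$ is $\phi(p^j)-\phi(p^{j-1})$ for $1\le j\le a$ (with $\phi(p^0)=1$) and $1$ for $j=0$; summing $j\log p$ against this and using $\sum_{d|n}\phi(d)=n$ after an Abel summation gives $\sum_{\psi\bmod p^a}\log q_\psi^*=(a\,\phi(p^a)-p^{a-1})\log p$. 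Substituting back and using $p^{a-1}/\phi(p^a)=1/(p-1)$ gives $\sum_{\chi\bmod q}\log q_\chi^*=\phi(q)\sum_{p|q}(a_p-\tfrac{1}{p-1})\log p=\phi(q)\big(\log q-\sum_{p|q}\tfrac{\log p}{p-1}\big)$, as claimed.

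For the third estimate I would split the prime divisors of $q$ at $\log q$. Those with $p\le\log q$ contribute $\ll\sum_{p\le\log q}\tfrac{\log p}{p}\ll\log\log q$ by Mertens' theorem. Those with $p>\log q$ number at most $\log q/\log\log q$ (their product divides $q$), and since $(\log t)/(t-1)$ is decreasing for $t\ge2$ each such term is $\ll(\log\log q)/\log q$, so together they contribute $O(1)$. Hence $\sum_{p|q}\tfrac{\log p}{p-1}\ll\log\log q$.
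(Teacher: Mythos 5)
The paper disposes of the first two assertions by citation --- the one-sided zero sum is Lemma~3.5 of Fiorilli--Martin and the conductor identity is their Proposition~3.3 --- and proves only the third estimate directly, splitting at $p \le (\log q)^2$ and using the crude bound $\sum_{p\mid q} 1 \le \log q/\log 2$. You instead reconstruct all three from first principles, which is a genuinely more self-contained route. Your combinatorial proof of the conductor identity (CRT to localize, counting characters of conductor exactly $p^j$ via $\phi(p^j)-\phi(p^{j-1})$, Abel summation to reach $(a\,\phi(p^a)-p^{a-1})\log p$) is correct and is presumably essentially what Fiorilli--Martin do; the identity is exact, with no error term, so there is little room for a different mechanism. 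Your proof of the third estimate, splitting at $p \le \log q$ and bounding the large primes by their count $\le \log q/\log\log q$ times the monotone decay of $(\log t)/(t-1)$, is also correct and is only a cosmetic variant of the paper's split at $(\log q)^2$.

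Your argument for the first assertion is the standard Hadamard-product/functional-equation computation reducing the two-sided zero sum to $\log q_\chi^* + 2\,\mathrm{Re}\,\tfrac{L'}{L}(1,\chi^*) + O(1)$, and then invoking the conditional bound $\tfrac{L'}{L}(1,\chi^*)\ll\log\log q$. This is almost certainly the content of \cite{FiM} Lemma~3.5, and it is correct as far as it goes. The only soft spot is the final ``restricting to $\gamma_\chi>0$ halves the sum'' step. For $\chi$ complex the zero set of $L(s,\chi)$ is \emph{not} symmetric under $\gamma\mapsto-\gamma$, and the justification you offer --- that $N_\chi^{+}$ and $N_\chi^{-}$ share the same Riemann--von Mangoldt main term --- only controls the discrepancy by the size of the RvM remainder, which is $O(\log q^*T)$; pushing this through $\int \tfrac{dN}{1/4+t^2}$ yields an error of order $\log q_\chi^*$, not $\log\log q$, so it does not by itself deliver the stated bound for an individual complex $\chi$. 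You do flag this yourself and note that in this paper the estimate is only ever used after summing over all $\chi\bmod q$, where $\sum_{\chi\neq\chi_0}\sum_{\gamma_\chi>0}=\tfrac12\sum_{\chi\neq\chi_0}\sum_{\gamma_\chi}$ holds exactly via $\chi\leftrightarrow\bar\chi$; that observation is correct and makes the gap harmless for the subsequent arguments, but if one wants the one-sided statement as written for a single complex $\chi$ one needs either a sharper symmetry input or the argument actually carried out in \cite{FiM}.
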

\begin{proof}
The first estimate follows from Lemma 3.5 of \cite{FiM}, and the second is proved in Proposition 3.3 of \cite{FiM}. Finally, we have
$$\sum_{p|q}\frac{\log p}{p-1}\leq \sum_{p\leq (\log q)^2}\frac{\log p}{p-1}+\frac{1}{\log q}\sum_{p|q}1\ll \log\log q,$$
which follows from the trivial bound $\sum_{p|q}1 \leq \log q/\log2$. 
\end{proof}
From this lemma, one can deduce the more precise asymptotic
$$\Va=\phi(q)\log q+O(\phi(q)\log\log q).$$ Our next result gives the classical estimate for
$$N_q(T):=\sum_{\substack{\chi\neq \chi_0\\ \chi\bmod q}}\sum_{0<\gamma_{\chi}<T} 1.$$
\begin{lem}
If $T\geq 2$, then
$$
 N_q(T)=\frac{(\phi(q)-1)}{2\pi}T\log T+\frac{R(q)}{2\pi}T
+O(\phi(q)\log qT),
$$
where
$$ R(q):=\phi(q)\left(\log q-\sum_{p|q}\frac{\log p}{p-1}\right)-(\phi(q)-1)(\log 2\pi+1).$$
\end{lem}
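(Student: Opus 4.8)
The plan is to reduce $N_q(T)$ to a sum over \emph{primitive} Dirichlet characters and then apply the classical Riemann--von Mangoldt zero-counting formula. First I would carry out the reduction to primitive characters. For a non-trivial $\chi\bmod q$ one has $L(s,\chi)=L(s,\chi^*)\prod_{p\mid q}\bigl(1-\chi^*(p)p^{-s}\bigr)$, and the extra Euler factors have no zeros in the strip $0<\text{Re}(s)<1$; hence the non-trivial zeros of $L(s,\chi)$ coincide with those of the primitive function $L(s,\chi^*)$, so $\{\gamma_\chi\}$ is exactly the set of ordinates of the zeros of $L(s,\chi^*)$. For a primitive character $\psi$ let $N(T,\psi)$ denote the number of zeros $\rho=\beta+i\gamma$ of $L(s,\psi)$ with $|\gamma|<T$, and $N^{+}(T,\psi)$ the number with $0<\gamma<T$, so that $N_q(T)=\sum_{\chi\neq\chi_0}N^{+}(T,\chi^*)$. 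Since $\rho\mapsto\overline{\rho}$ is a bijection from the zeros of $L(s,\chi^*)$ onto those of $L(s,\overline{\chi^*})$, one has $N^{+}(T,\overline{\chi^*})=N(T,\chi^*)-N^{+}(T,\chi^*)-m_\chi$, where $m_\chi=O(\log q)$ bounds the multiplicity of a possible zero with $\gamma=0$. As $\chi$ runs over the non-principal characters modulo $q$ so does $\overline{\chi}$, so summing this relation over all $\chi\neq\chi_0$ and re-indexing $\chi\leftrightarrow\overline{\chi}$ gives
$$N_q(T)=\frac12\sum_{\substack{\chi\neq\chi_0\\ \chi\bmod q}}N(T,\chi^*)+O\bigl(\phi(q)\log q\bigr).$$

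Next I would insert the classical Riemann--von Mangoldt formula: for a primitive character $\psi$ of conductor $q_\psi$ and $T\geq 2$,
$$N(T,\psi)=\frac{T}{\pi}\log\frac{q_\psi T}{2\pi}-\frac{T}{\pi}+O\bigl(\log(q_\psi T)\bigr)$$
(see, for example, Davenport, \emph{Multiplicative Number Theory}, Ch.~16). Summing over the $\phi(q)-1$ non-principal characters modulo $q$, using $q^*_\chi\leq q$ to control the error terms, and writing $\log\frac{q^*_\chi T}{2\pi}=\log q^*_\chi+\log\frac{T}{2\pi}$, I obtain
$$\sum_{\chi\neq\chi_0}N(T,\chi^*)=\frac{T}{\pi}\sum_{\chi\neq\chi_0}\log q^*_\chi+\frac{\phi(q)-1}{\pi}\,T\log\frac{T}{2\pi}-\frac{\phi(q)-1}{\pi}\,T+O\bigl(\phi(q)\log(qT)\bigr).$$
Now $q^*_{\chi_0}=1$, so the second identity of the previous lemma gives $\sum_{\chi\neq\chi_0}\log q^*_\chi=\sum_{\chi\bmod q}\log q^*_\chi=\phi(q)\bigl(\log q-\sum_{p\mid q}\frac{\log p}{p-1}\bigr)$. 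Separating the $T\log T$ term from the terms linear in $T$, the coefficient of $T/\pi$ becomes precisely
$$\phi(q)\Bigl(\log q-\sum_{p\mid q}\frac{\log p}{p-1}\Bigr)-(\phi(q)-1)(\log 2\pi+1)=R(q),$$
and dividing by $2$ (the error $O(\phi(q)\log q)$ of the first step being absorbed) yields the stated asymptotic.

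The analytic ingredients — the Riemann--von Mangoldt formula with explicit conductor dependence in the error, together with the elementary conductor identity already established — are entirely standard, so I do not expect a serious obstacle. The only point requiring care is the combinatorics of the first step: passing from the imprimitive $L(s,\chi)$ to $L(s,\chi^*)$, and, because the zeros of a complex $L$-function are not symmetric about the real axis, pairing each $\chi$ with $\overline{\chi}$ to restore that symmetry and thereby produce the factor $\tfrac12$ in front of the main terms.
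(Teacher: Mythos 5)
Your proposal is correct and follows essentially the same route as the paper: reduce to primitive characters, sum the Riemann--von Mangoldt formula over $\chi\ne\chi_0$, and insert the conductor identity from Lemma~2.3 to produce $R(q)$. You spell out one detail the paper suppresses — since the zeros of $L(s,\chi)$ for complex $\chi$ need not be symmetric about the real axis, you pair $\chi$ with $\overline{\chi}$ to pass from the two-sided count to the one-sided count, whereas the paper cites the one-sided version of the formula directly, whose $O(\log qT)$ error term already absorbs that asymmetry.
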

\begin{proof}
Using Lemma 2.3 and appealing to the classical estimates for the number of zeros of Dirichlet $L$-functions (see Chapters 15 and 16 of \cite{Da}), we get
\begin{equation*}
\begin{aligned}
 N_q(T)&=\sum_{\substack{\chi\neq \chi_0\\ \chi\bmod q}}\left(\frac{T}{2\pi}\log \frac{q^*_{\chi}T}{2\pi}-\frac{T}{2\pi}+O(\log q T)\right)\\
&=\frac{(\phi(q)-1)}{2\pi}T\log T+\frac{R(q)}{2\pi}T
+O(\phi(q)\log qT),
\end{aligned}
\end{equation*}
as desired.
\end{proof}
Using Lemmas 2.3 and 2.4 we establish:
\begin{pro} There exists a constant $T_0\geq 2$ such that if $T\geq T_0$, and $\log T/\log q\to 0$ as $q\to \infty$, then
$$ \frac{1}{20}\phi(q)\log q\log T\leq \sum_{\substack{\chi\neq \chi_0\\ \chi\bmod q}}\sum_{0<\gamma_{\chi}\leq T} \frac{1}{\sqrt{\frac{1}{4}+\gamma_{\chi}^2}} \leq 3\phi(q)\log q\log T$$
if $q$ is sufficiently large, and
$$ \sum_{\substack{\chi\neq \chi_0\\ \chi\bmod q}}\sum_{\gamma_{\chi}> T} \frac{1}{\frac{1}{4}+\gamma_{\chi}^2}= \frac{\phi(q)\log q}{2\pi T}\left(1+ O\left(\frac{\log T}{\log q}+ \frac{\log\log q}{\log q}+\frac{1}{T}\right)\right).$$
\end{pro}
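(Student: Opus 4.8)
The plan is to evaluate both sums by Abel summation (Riemann--Stieltjes integration) against the counting function $N_q(t)$, inserting the asymptotic of Lemma 2.4 in the form $N_q(t)=\frac{\phi(q)-1}{2\pi}\,t\log t+\frac{R(q)}{2\pi}\,t+O\big(\phi(q)\log(qt)\big)$, together with the estimate $R(q)=\phi(q)\log q+O(\phi(q)\log\log q)$, which is immediate from the definition of $R(q)$ and Lemma 2.3.

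For the tail sum I would write $\sum_{\chi\neq\chi_0}\sum_{\gamma_\chi>T}(\frac14+\gamma_\chi^2)^{-1}=\int_T^\infty(\frac14+t^2)^{-1}\,dN_q(t)$ and integrate by parts; the boundary term at infinity vanishes since $N_q(t)\ll_q t\log t$, so the sum equals $-\,N_q(T)(\frac14+T^2)^{-1}+\int_T^\infty N_q(t)\,2t(\frac14+t^2)^{-2}\,dt$. Substituting the asymptotic for $N_q$, the dominant piece is that of $\frac{R(q)}{2\pi}t$: with the elementary evaluations $\int_T^\infty 2t^2(\frac14+t^2)^{-2}\,dt=\frac2T+O(T^{-3})$ and $\frac{T}{\frac14+T^2}=\frac1T(1+O(T^{-2}))$, the two contributions combine to $\frac{R(q)}{2\pi T}(1+O(T^{-2}))=\frac{\phi(q)\log q}{2\pi T}\big(1+O(\log\log q/\log q)\big)$. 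The $t\log t$ term of $N_q$ contributes $O(\phi(q)\log T/T)$, i.e.\ a relative error $O(\log T/\log q)$, and the error term of $N_q$ contributes, via $\int_T^\infty\log(qt)\,t^{-3}\,dt\ll\log(qT)/T^2$, a quantity $O(\phi(q)\log(qT)/T^2)$, a relative error $O(1/T+\log T/\log q)$. Assembling these proves the second assertion.

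For the first sum I would discard the zeros with $\gamma_\chi\le2$, whose total contribution is $\le 2N_q(2)\ll\phi(q)\log q$ because $(\frac14+t^2)^{-1/2}\le2$; Abel summation on the range $2<\gamma_\chi\le T$ then gives $N_q(T)(\frac14+T^2)^{-1/2}+\int_2^T N_q(t)\,t(\frac14+t^2)^{-3/2}\,dt+O(\phi(q)\log q)$. Here the decisive term is again $\frac{R(q)}{2\pi}t$, and the elementary identity $\int_2^T t^2(\frac14+t^2)^{-3/2}\,dt=\log T+O(1)$ yields the main term $\frac{R(q)}{2\pi}\log T=\frac1{2\pi}\phi(q)\log q\log T+O(\phi(q)\log\log q\log T)$. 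The $t\log t$ term of $N_q$ contributes a positive quantity $\ll\phi(q)(\log T)^2$ (using $t^2(\frac14+t^2)^{-3/2}\le 1/t$), the error term of $N_q$ contributes $O(\phi(q)\log q)$ via $\int_2^T\log(qt)\,t^{-2}\,dt\ll\log q$, and the boundary term is $\frac{\phi(q)\log q}{2\pi}+O(\phi(q)\log T)$. Since $\frac1{2\pi}$ lies strictly between $\frac1{20}$ and $3$, and since in the range $\log T=o(\log q)$ each secondary term is at most $\phi(q)\log q\log T$ times something small once $q$ is large and $T\ge T_0$ --- namely $O(1/\log T_0)$ for the terms of size $\phi(q)\log q$, $O(\log T/\log q)$ for the term of size $\phi(q)(\log T)^2$, and $O(\log\log q/\log q)$ for the rest --- one obtains the claimed two-sided bound after choosing $T_0$ suitably large.

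The work is essentially bookkeeping, and the point to watch is uniformity in $q$: the implied constants in Lemma 2.4 are absolute, and one must check that the error term $O(\phi(q)\log(qt))$ of $N_q(t)$, integrated against the weights $2t(\frac14+t^2)^{-2}$ and $t(\frac14+t^2)^{-3/2}$, really lands inside the error terms claimed --- this reduces to the two convergent elementary integrals displayed above. Finally, $T_0$ should be fixed only at the very end, after the absolute constants in the secondary terms of the first sum are pinned down, so that both $\frac1{20}\log T\le\frac1{2\pi}\log T-(\text{secondary terms})$ and its upper analogue hold for every $T\ge T_0$; the value of $N_q$ at its jumps when setting up the Stieltjes integrals is harmless (one may take $T\notin S$, and in any case the discrepancy is $O(1)$).
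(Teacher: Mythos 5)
Your proof is correct and follows essentially the same skeleton as the paper's: integrate by parts against $N_q(t)$ and feed in Lemma~2.4 together with $R(q)=\phi(q)\log q+O(\phi(q)\log\log q)$. For the tail sum $\sum_{\gamma_\chi>T}(\tfrac14+\gamma_\chi^2)^{-1}$ your computation matches the paper's almost line by line. For the two-sided bound on $\sum_{\gamma_\chi\le T}(\tfrac14+\gamma_\chi^2)^{-1/2}$ you take a somewhat finer route: you keep the full three-term asymptotic for $N_q(t)$ inside the Stieltjes integral, isolate the exact main term $\tfrac1{2\pi}\phi(q)\log q\log T$ (using $\int_2^T t^2(\tfrac14+t^2)^{-3/2}\,dt=\log T+O(1)$), and then verify that every secondary piece is $o(\phi(q)\log q\log T)$ once $T\ge T_0$ with $T_0$ large and $\log T=o(\log q)$; the bound then follows since $\tfrac1{20}<\tfrac1{2\pi}<3$. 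The paper instead first collapses Lemma~2.4 into the crude two-sided estimate $\tfrac{T}{10}\phi(q)\log q\le N_q(T)\le T\phi(q)\log q$ for $T\ge T_1$ and runs the integral with those bounds, using the pointwise inequality $2t^3\ge(\tfrac14+t^2)^{3/2}$ on the lower side. Both are fine; yours is slightly heavier on bookkeeping but yields the constant $\tfrac1{2\pi}$ explicitly, while the paper's is shorter at the cost of a cruder constant. One small imprecision to tidy up: your stated boundary term $N_q(T)(\tfrac14+T^2)^{-1/2}=\tfrac{\phi(q)\log q}{2\pi}+O(\phi(q)\log T)$ is incomplete as written --- it also carries pieces of size $O(\phi(q)\log\log q)$ and $O(\phi(q)\log(qT)/T)$ --- but since each of these is already covered by the buckets you list (the first is absorbed into the $O(\log\log q/\log q)$ relative error, the second is dominated by your ``terms of size $\phi(q)\log q$'' bucket once $T\ge T_0$), the conclusion is unaffected.
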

\begin{proof}
First, since $\log T=o(\log q)$ then Lemmas 2.3 and 2.4 yield
\begin{equation}
\begin{aligned}
N_q(T)&= \frac{T}{2\pi}\phi(q)\log q +O\left(\phi(q)T\log T+ T\phi(q)\log\log q+ \phi(q)\log q\right)\\
&= \frac{T}{2\pi}\phi(q)\log q\left(1+O\left(\frac{\log T}{\log q} + \frac{\log\log q}{\log q} + \frac{1}{T}\right)\right).
\end{aligned}
\end{equation}
Hence, there exists a suitably large constant $T_1\geq 2$ such that
\begin{equation}
\frac{T}{10}\phi(q)\log q\leq  N_q(T)\leq T\phi(q)\log q,
\end{equation}
if $T\geq T_1$ and $\log T=o(\log q)$. Now assume that $T\geq T_1^2$. Then
$$ \sum_{\substack{\chi\neq \chi_0\\ \chi\bmod q}}\sum_{0<\gamma_{\chi}\leq T} \frac{1}{\sqrt{\frac{1}{4}+\gamma_{\chi}^2}}= \int_{0}^T\frac{dN_q(t)}{\sqrt{\frac14+t^2}}= \frac{N_q(T)}{\sqrt{\frac14+T^2}}+ 2\int_{0}^T\frac{tN_q(t)}{\left(\frac14+t^2\right)^{3/2}}dt.$$
Then, using the lower bound of (2.6) we get
$$\sum_{\substack{\chi\neq \chi_0\\ \chi\bmod q}}\sum_{0<\gamma_{\chi}\leq  T} \frac{1}{\sqrt{\frac{1}{4}+\gamma_{\chi}^2}} \geq \frac{\phi(q)\log q}{10}\int_{T_1}^T\frac{2t^2}{\left(\frac14+t^2\right)^{3/2}}dt\geq \frac{1}{20}\phi(q)\log q\log T,$$
which follows from the fact that $2t^3\geq (1/4+t^2)^{3/2}$ for $t\geq 1$.
Similarly, we obtain from the upper bound of (2.6)
$$ \int_{0}^{T_1}\frac{tN_q(t)}{\left(\frac14+t^2\right)^{3/2}}dt\ll_{T_1} \phi(q)\log q \text{ and } \int_{T_1}^{T}\frac{tN_q(t)}{\left(\frac14+t^2\right)^{3/2}}dt \leq \phi(q)\log q\log T.$$
This implies
$$\sum_{\substack{\chi\neq \chi_0\\ \chi\bmod q}}\sum_{0<\gamma_{\chi}\leq  T} \frac{1}{\sqrt{\frac{1}{4}+\gamma_{\chi}^2}}\leq 2\phi(q)\log q\log T+ O(\phi(q)\log q).$$
Therefore, if $T\geq T_0$ for some suitably large constant $T_0$, then
$$
\frac{1}{20}\phi(q)\log q\log T\leq \sum_{\substack{\chi\neq \chi_0\\ \chi\bmod q}}\sum_{0<\gamma_{\chi}\leq  T}\frac{1}{\sqrt{\frac{1}{4}+\gamma_{\chi}^2}}\leq 3\phi(q)\log q\log T.
$$
On the other hand, we have
$$ \sum_{\substack{\chi\neq \chi_0\\ \chi\bmod q}}\sum_{\gamma_{\chi}>T} \frac{1}{\frac{1}{4}+\gamma_{\chi}^2}= \int_{T}^{\infty}\frac{dN_q(t)}{\frac14+t^2}= -\frac{N_q(T)}{\frac14+T^2}+ 2\int_{T}^{\infty}\frac{tN_q(t)}{\left(\frac14+t^2\right)^{2}}dt.$$
Thus, inserting the estimate (2.5) into the previous identity gives
\begin{equation}
\sum_{\substack{\chi\neq \chi_0\\ \chi\bmod q}}\sum_{\gamma_{\chi}>T} \frac{1}{\frac{1}{4}+\gamma_{\chi}^2}= - \frac{T\phi(q)\log q}{2\pi(\frac14+T^2)} + \frac{\phi(q)\log q}{\pi}\int_{T}^{\infty}\frac{t^2}{\left(\frac14+t^2\right)^{2}}dt + E_1,
\end{equation}
where $$E_1\ll \frac{\phi(q)\log q}{T}\left(\frac{\log T}{\log q}+ \frac{\log\log q}{\log q}+\frac{1}{T}\right).$$
Moreover, the main term of (2.7) equals
$$\frac{\phi(q)\log q}{2\pi T}\left(1+O\left(\frac{1}{T}\right)\right),$$
as desired.
\end{proof}
\begin{proof}[Proof of Theorem 2]
Since $\ex(\cos(2\pi \theta(\gamma_{\chi})))=0$ and $\ex(\cos^2(2\pi \theta(\gamma_{\chi})))=1/2>0$, then we can apply Theorem 2.2 to derive upper and lower bounds for $\pr(Y(q)\geq V).$  We first establish the upper bound. Taking $T=0$ in (2.3) yields
\begin{equation}
\pr(Y(q)\geq V)\leq \exp\left(-\frac{V^2}{32\Va}\right)\leq \exp\left(-\frac{V^2}{40\phi(q)\log q}\right),
\end{equation}
if $q$ is sufficiently large, since $\Va\sim \phi(q)\log q.$ Now, let $T=T(A)$ be a suitably large number such that $\log T\geq 40 A$. Then Proposition 2.5 implies
$$ \sum_{\substack{\chi\neq \chi_0\\ \chi\bmod q}}\sum_{\substack{(1/4+\gamma_{\chi}^2)^{1/2}\leq 2T\\ \gamma_{\chi}>0}}\frac{1}{\sqrt{\frac{1}{4}+\gamma_{\chi}^2}}\geq\sum_{\substack{\chi\neq \chi_0\\ \chi\bmod q}}\sum_{0<\gamma_{\chi}\leq T}\frac{1}{\sqrt{\frac{1}{4}+\gamma_{\chi}^2}}\geq \frac{1}{20}\phi(q)\log q\log T\geq 2V,$$
since $V\leq A\phi(q)\log q$. On the other hand, using Proposition 2.5 we get
$$ \sum_{\substack{\chi\neq \chi_0\\ \chi\bmod q}}\sum_{\substack{(1/4+\gamma_{\chi}^2)^{1/2}>2T\\ \gamma_{\chi}>0}}\frac{1}{\frac{1}{4}+\gamma_{\chi}^2}\geq \sum_{\substack{\chi\neq \chi_0\\ \chi\bmod q}}\sum_{\gamma_{\chi}>2T}\frac{1}{\frac{1}{4}+\gamma_{\chi}^2}\geq \frac{\phi(q)\log q}{8\pi T},$$
if $T$ is suitably large.
Hence, applying (2.4) we derive
\begin{equation}
\pr(Y(q)\geq V)\gg \exp\left(-c_1(A)\frac{V^2}{\phi(q)\log q}\right),
\end{equation}
for some positive number $c_1(A)$ which depends only on $A$. The theorem follows upon combining the bounds (2.8) and (2.9) with Lemma 2.1.
\end{proof}
\begin{proof}[Proof of Theorem 3]
The result can be deduced by proceeding along the same lines as in the proof of Theorem 2. Indeed, using Lemma 2.1, Theorem 2.2 and Proposition 2.5, the lower bound in Theorem 3 can be derived by taking $T= \exp(50 V/(\phi(q)\log q))$. Similarly, to get the corresponding upper bound choose $T= \exp(V/(10\phi(q)\log q)).$
\end{proof}

\section{Approximating $\mu_{q;a_1,\dots,a_r}$ by a multivariate Gaussian distribution: Proof of Theorem 1}

Assuming GRH and LI, Rubinstein and Sarnak obtained an explicit formula for the Fourier transform of $\mu_{q;a_1,\dots,a_r}$ in terms of the non-trivial zeros of Dirichlet $L$-functions attached to non-principal characters modulo $q$. More specifically they showed that
\begin{equation}
\hat{\mu}_{q;a_1,\dots, a_r}(t_1,\dots,t_r)=  \exp\left(i\sum_{j=1}^rC_q(a_j)t_j\right)\prod_{\substack{\chi\neq \chi_0\\ \chi\bmod q}}\prod_{\gamma_{\chi}>0}J_0\left(\frac{2\left|\sum_{j=1}^r\chi(a_j)t_j\right|}
{\sqrt{\frac14+\gamma_{\chi}^2}}\right).
\end{equation}
First, we record an exponentially decreasing upper bound for $\hat{\mu}_{q;a_1,\dots,a_r}(\vt)$, which is established in \cite{La}.
\begin{lem}[Proposition 3.2 of \cite{La}] Assume GRH and LI. Fix an integer $r\geq 2$. Let $q$ be large and $0<\epsilon<1/2$ be a real number. Then, uniformly for all $r$-tuples of distinct reduced residues $(a_1,\dots,a_r)$ modulo $q$ we have
$$ |\hat{\mu}_{q;a_1,\dots, a_r}(t_1,\dots,t_r)|\leq \exp(-c_{11}(r)\phi(q)||\vt||),$$ for $||\vt||\geq 400$ and
$$ |\hat{\mu}_{q;a_1,\dots, a_r}(t_1,\dots,t_r)|\leq \exp(-c_{12}(r)\epsilon^2\phi(q)\log q)$$ for  $\epsilon\leq ||\vt||\leq 400$, where $c_{11}(r)$ and $c_{12}(r)$ are positive numbers that depend only on $r$.
\end{lem}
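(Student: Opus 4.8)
The plan is to bound the modulus of the explicit product formula (3.1) for $\hat{\mu}_{q;a_1,\dots,a_r}$ directly. Since the exponential prefactor $\exp(i\sum_{j}C_q(a_j)t_j)$ has modulus $1$,
$$
|\hat{\mu}_{q;a_1,\dots,a_r}(\vt)|=\prod_{\substack{\chi\neq\chi_0\\ \chi\bmod q}}\prod_{\gamma_\chi>0}\left|J_0\left(\frac{2M_\chi(\vt)}{\sqrt{\frac14+\gamma_\chi^2}}\right)\right|,\qquad M_\chi(\vt):=\left|\sum_{j=1}^r\chi(a_j)t_j\right|,
$$
and since every factor is $\leq 1$ in modulus it suffices, for a positive proportion of the characters $\chi$, to exhibit a well-chosen window of zeros $\gamma_\chi$ over which the partial product is already exponentially small.

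First I would locate the characters for which $M_\chi(\vt)$ is comparable to $||\vt||$. By the orthogonality relations and the distinctness of the $a_j$ one has $\sum_{\chi\bmod q}M_\chi(\vt)^2=\phi(q)||\vt||^2$, hence $\sum_{\chi\neq\chi_0}M_\chi(\vt)^2=\phi(q)||\vt||^2-|\sum_j t_j|^2\geq(\phi(q)-r)||\vt||^2$; combined with the trivial bound $M_\chi(\vt)\leq\sqrt{r}\,||\vt||$, a counting argument shows that the set $G$ of non-principal $\chi$ with $M_\chi(\vt)\geq\tfrac12||\vt||$ satisfies $|G|\gg_r\phi(q)$ once $q$ is large. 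Next I would discard the characters of small conductor: Lemma 2.3 gives $\sum_{\chi\bmod q}(\log q-\log q^*_\chi)=\phi(q)\sum_{p\mid q}\frac{\log p}{p-1}\ll\phi(q)\log\log q$, so there are only $o(\phi(q))$ characters with $q^*_\chi<q^{1/2}$, and therefore $G':=\{\chi\in G:\ q^*_\chi\geq q^{1/2}\}$ still satisfies $|G'|\gg_r\phi(q)$. Thus for every $\chi\in G'$ we have simultaneously $M_\chi(\vt)\in[\tfrac12||\vt||,\sqrt{r}\,||\vt||]$ and $\log q^*_\chi\geq\tfrac12\log q$.

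It remains to bound the local factor $P_\chi:=\prod_{\gamma_\chi>0}|J_0(2M_\chi(\vt)/\sqrt{\frac14+\gamma_\chi^2})|$ for $\chi\in G'$, in two ranges. When $\epsilon\leq||\vt||\leq 400$, I would keep only the zeros with $0<\gamma_\chi\leq T_0$ for a large absolute constant $T_0$: the argument of $J_0$ is then $\geq 2M_\chi(\vt)/\sqrt{\frac14+T_0^2}\geq\beta_0\epsilon$ with $\beta_0:=2/\sqrt{\frac14+T_0^2}$, and since $\beta_0\epsilon<1$ one checks that $\sup_{x\geq\beta_0\epsilon}|J_0(x)|=J_0(\beta_0\epsilon)\leq\exp(-c\epsilon^2)$ (from $J_0(y)\leq 1-\tfrac{15}{64}y^2\leq\exp(-\tfrac{15}{64}y^2)$ for $0\leq y\leq 1$, together with the fact that $|J_0|$ never rises back above $J_0(1)$ on $[1,\infty)$); hence $P_\chi\leq\exp(-c\epsilon^2 N(T_0,\chi))$, and the classical zero-count used in the proof of Lemma 2.4 gives $N(T_0,\chi)\geq\frac{T_0}{2\pi}\log\frac{q^*_\chi T_0}{2\pi}-\frac{T_0}{2\pi}+O(\log q)$, which is $\gg\log q$ once $T_0$ is large enough relative to the absolute implied constant and $\log q^*_\chi\geq\tfrac12\log q$. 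When $||\vt||\geq 400$, so $M_\chi(\vt)\geq 200$, I would instead keep the zeros with $\sqrt{M_\chi(\vt)^2-\frac14}<\gamma_\chi<\sqrt{4M_\chi(\vt)^2-\frac14}$: on this window the argument of $J_0$ lies in $[1,2]$, where $|J_0|\leq J_0(1)<1$; the window has length $\geq M_\chi(\vt)$ and lies above height $199$, so the same zero-count shows it contains $\gg M_\chi(\vt)\log q\gg||\vt||$ zeros, whence $P_\chi\leq\exp(-c||\vt||)$. Multiplying over $\chi\in G'$ and bounding the remaining factors by $1$ then yields $|\hat{\mu}_{q;a_1,\dots,a_r}(\vt)|\leq\exp(-c\epsilon^2\log q\,|G'|)\leq\exp(-c_{12}(r)\epsilon^2\phi(q)\log q)$ in the first range and $|\hat{\mu}_{q;a_1,\dots,a_r}(\vt)|\leq\exp(-c||\vt||\,|G'|)\leq\exp(-c_{11}(r)\phi(q)||\vt||)$ in the second.

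The step I expect to be the main obstacle is the uniform lower bound for the number of relevant zeros in the range $\epsilon\leq||\vt||\leq 400$: one genuinely needs the main term $\frac{T_0}{2\pi}\log q^*_\chi$ of the zero-counting function to dominate its $O(\log q)$ error, which is exactly what forces the preliminary removal of the small-conductor characters (a character of small conductor simply does not carry enough low-lying zeros) and a careful choice of the absolute constant $T_0$. The remaining ingredients — the orthogonality identity and the elementary behaviour of $J_0$ near the origin and on $[1,\infty)$ — are routine.
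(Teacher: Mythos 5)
The paper does not prove this lemma; it is quoted verbatim from Proposition 3.2 of \cite{La}, so there is no internal proof to compare against. Your argument, however, is a correct and self-contained derivation. The key structural ideas — using orthogonality in the form $\sum_{\chi}|\sum_j\chi(a_j)t_j|^2=\phi(q)\|\vt\|^2$ to locate a positive proportion of characters with $M_\chi(\vt)\gg\|\vt\|$, discarding the $o(\phi(q))$ characters of conductor below $q^{1/2}$ via Lemma 2.3 so that the main term of the zero-counting formula dominates its error, and then feeding a well-chosen window of zeros into elementary bounds for $J_0$ — are exactly what such a proof needs and are almost certainly what \cite{La} does. The elementary facts you invoke about $J_0$ ($J_0(y)\le 1-\tfrac{15}{64}y^2$ for $0\le y\le 1$, and $\sup_{x\ge y}|J_0(x)|=J_0(y)$ for $0<y<1$ because $J_0$ decreases to its first zero near $2.405$ and all subsequent extrema are below $J_0(1)$ in modulus) are correct, and the choice of the window $\sqrt{M_\chi^2-\tfrac14}<\gamma_\chi<\sqrt{4M_\chi^2-\tfrac14}$ in the range $\|\vt\|\ge 400$ does put the argument of $J_0$ in $[1,2]$ and contain $\gg M_\chi\log q$ zeros.

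One small slip worth fixing: with $\beta_0:=2/\sqrt{\tfrac14+T_0^2}$ and $M_\chi\ge\|\vt\|/2\ge\epsilon/2$, the argument of $J_0$ for $\gamma_\chi\le T_0$ is only $\ge\beta_0\epsilon/2$, not $\ge\beta_0\epsilon$ as written. This is a harmless factor of two (redefine $\beta_0$, or just carry the extra $1/2$), and it changes nothing beyond the unnamed constant $c$. Similarly, the length of the window in the large-$\|\vt\|$ case is $\ge M_\chi/2$ rather than $\ge M_\chi$ for $M_\chi\ge 1$, again affecting only constants. Neither affects the validity of the conclusions.
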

Following the method developed by the author in \cite{La}, we shall derive an asymptotic formula for $\mu_{q;a_1,\dots,a_r}\left(||\vx||>\lambda\sqrt{\Va}\right)$ in the range $0< \lambda\leq \sqrt{\log\log q}$, from which Theorem 1 will be deduced.
\begin{thm}Assume GRH and LI. Fix an integer $r\geq 2$. Let $q$ be large and $a_1,\dots,a_r$ be distinct reduced residue classes modulo $q$. Then in the range $0< \lambda\leq \sqrt{\log\log q}$ we have
\begin{equation*}
\begin{aligned}
 \mu_{q;a_1,\dots,a_r}\left(||\vx||>\lambda\sqrt{\Va}\right)&=
(2\pi)^{-r/2}\int_{||\vx||>\lambda} \exp\left(-\frac{x_1^2+\cdots+x_r^2}{2}\right)d\vx\\
& +\frac{1}{2\Va^2}\sum_{1\leq j<k\leq r}B_q(a_j,a_k)^2 F_{j,k}(\lambda)
+ O_r\left(\frac{(\log\log q)^r}{\log^3q}\right),
\end{aligned}
\end{equation*}
where
$$F_{j,k}(\lambda)= (2\pi)^{-r/2}\int_{||\vx||>\lambda}(x_j^2-1)(x_k^2-1) \exp\left(-\frac{x_1^2+\cdots+x_r^2}{2}\right)d\vx.$$
\end{thm}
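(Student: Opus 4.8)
The plan is to sharpen the central‑limit estimate (1.5) of Rubinstein and Sarnak by one further order, through a direct Fourier‑analytic comparison of $\mu_{q;a_1,\dots,a_r}$ with a multivariate Gaussian. Write $V=\Va$ and $M=I+R$, where $R_{jj}=0$ and $R_{jk}=b_{jk}:=B_q(a_j,a_k)/V$ for $j\neq k$; thus $M$ is the covariance matrix of $X_{q;a_1,\dots,a_r}/\sqrt V$. Since $|C_q(a_j)|\le d(q)=q^{o(1)}$ and $\sqrt V\gg\sqrt q$, the mean vector of $X_{q;a_1,\dots,a_r}/\sqrt V$ has norm $q^{-1/2+o(1)}$, and because this law — and the law $\nu$ obtained from it by recentering — has a density bounded by $O_r(1)$ (which follows from the bounds on $\widehat\nu$ established below, via Fourier inversion), we get
$$\mu_{q;a_1,\dots,a_r}\bigl(||\vx||>\lambda\sqrt V\bigr)=\nu\bigl(||\vx||>\lambda\bigr)+O\bigl(q^{-1/2+o(1)}\bigr),$$
where $\nu$ is a symmetric, mean‑zero probability measure with covariance $M$ and characteristic function $\widehat\nu(\vt)=\prod_{\chi\neq\chi_0}\prod_{\gamma_\chi>0}J_0\!\bigl(2\bigl|\sum_j\chi(a_j)t_j\bigr|/(\sqrt V\sqrt{\tfrac14+\gamma_\chi^2})\bigr)$.

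I would next compare $\widehat\nu$ with the Gaussian characteristic function $e^{-\frac12\vt^{\mathrm T}M\vt}$. Choose $\delta=\delta(r)>0$ small enough that for $||\vt||\le\delta\sqrt V$ every argument of $J_0$ above is $<1$; the expansion $\log J_0(x)=-x^2/4-x^4/64+O(x^6)$ then gives
$$\log\widehat\nu(\vt)=-\frac1V\sum_{\chi\neq\chi_0}\sum_{\gamma_\chi>0}\frac{\bigl|\sum_j\chi(a_j)t_j\bigr|^2}{\tfrac14+\gamma_\chi^2}+O_r\!\Bigl(\frac{||\vt||^4}{V^2}\sum_{\chi\neq\chi_0}\sum_{\gamma_\chi>0}\frac1{(\tfrac14+\gamma_\chi^2)^2}\Bigr).$$
By the definitions of $\Va$ and $B_q(a,b)$, the main term is exactly $-\tfrac12\vt^{\mathrm T}M\vt$, while $(\tfrac14+\gamma_\chi^2)^{-2}\le4(\tfrac14+\gamma_\chi^2)^{-1}$ bounds the remainder sum by $2V$; hence $\widehat\nu(\vt)=e^{-\frac12\vt^{\mathrm T}M\vt}\bigl(1+O_r(||\vt||^4/V)\bigr)$, and in particular $|\widehat\nu(\vt)|\le e^{-\frac14||\vt||^2}$, throughout $||\vt||\le\delta\sqrt V$. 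For $||\vt||>\delta\sqrt V$, Lemma 3.1 applied to $\widehat\mu_{q;a_1,\dots,a_r}(\vt/\sqrt V)$ gives $|\widehat\nu(\vt)|\le\exp(-c_r\phi(q))$. Writing $g_\nu$ and $g_M$ for the densities of $\nu$ and of $N(0,M)$, Fourier inversion together with these bounds yields
$$\sup_{\vx\in\mathbb R^r}\bigl|g_\nu(\vx)-g_M(\vx)\bigr|\le(2\pi)^{-r}\int_{\mathbb R^r}\bigl|\widehat\nu(\vt)-e^{-\frac12\vt^{\mathrm T}M\vt}\bigr|\,d\vt\ll_r\frac1V\ll_r\frac1{\phi(q)\log q}.$$

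Now I would truncate. Both $\nu(||\vx||>\log q)$ and $\int_{||\vx||>\log q}g_M\,d\vx$ are $\ll q^{-c\log q}$ (the first by combining Lemma 2.1 with the bound (2.8) on $\pr(Y(q)\ge\cdot)$, the second by Gaussian decay since $M\to I$), so, since $\{\lambda<||\vx||\le\log q\}$ has volume $\ll_r(\log q)^r$,
$$\nu\bigl(||\vx||>\lambda\bigr)=\int_{||\vx||>\lambda}g_M\,d\vx+O_r\!\Bigl(\frac{(\log q)^{r-1}}{\phi(q)}\Bigr)+O\bigl(q^{-c\log q}\bigr).$$
It remains to expand the purely Gaussian quantity $\int_{||\vx||>\lambda}g_M\,d\vx$ in powers of $R$. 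From $(\det M)^{-1/2}=1+\tfrac12\sum_{j<k}b_{jk}^2+O_r\bigl((\max_{j<k}|b_{jk}|)^3\bigr)$ and $M^{-1}=I-R+R^2+O_r\bigl((\max_{j<k}|b_{jk}|)^3\bigr)$ one Taylor‑expands $\exp(-\tfrac12\vx^{\mathrm T}M^{-1}\vx)$ and integrates over the region $||\vx||>\lambda$, which is invariant under every reflection $x_j\mapsto-x_j$. This invariance kills all first‑order terms in $R$; the second‑order terms assemble — using $x_j^2x_k^2-x_j^2-x_k^2+1=(x_j^2-1)(x_k^2-1)$ and the vanishing of cross‑terms with $\{j,k\}\neq\{l,m\}$ — exactly into $\tfrac1{2V^2}\sum_{j<k}B_q(a_j,a_k)^2F_{j,k}(\lambda)$; the third‑order remainder consists only of ``triangle'' monomials $b_{jk}b_{j\ell}b_{k\ell}$ (absent for $r=2$, where the first surviving remainder is therefore of order $(\max|b_{jk}|)^4$), and in every case it is $O_r\bigl((\max_{j<k}|b_{jk}|)^3\bigr)$, the polynomial‑times‑Gaussian integrals over $\mathbb R^r$ entering the bound being $O_r(1)$ uniformly in $\lambda$. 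Finally, starting from Lemma 2.3 one writes $B_q(a,b)=\text{Re}\sum_{\chi\neq\chi_0}\chi(b/a)\log q^*_\chi+O(\phi(q)\log\log q)$ and checks by an elementary computation with the conductors that $\bigl|\sum_{\chi\neq\chi_0}\chi(b/a)\log q^*_\chi\bigr|\le\phi(q)\log2$; thus $|B_q(a,b)|\ll\phi(q)\log\log q$, so $\max_{j<k}|b_{jk}|\ll\log\log q/\log q$ and the remainder — hence also the whole error, which absorbs the recentering error $O(q^{-1/2+o(1)})$, the Gaussian‑comparison error $O_r(1/(\phi(q)\log q))$, and the truncation error — is $O_r\bigl((\log\log q)^r/\log^3 q\bigr)$.

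The step I expect to be the main obstacle is the interlocking uniformity between the two ranges of $\vt$: the $\log J_0$‑expansion of $\widehat\nu$ holds only for $||\vt||\le\delta\sqrt V$, whereas Lemma 3.1 is useful only for $||\vt||\gg\sqrt V$, and the passage between them has to be arranged so that the density comparison is genuinely $O_r(1/(\phi(q)\log q))$ simultaneously for all admissible $(a_1,\dots,a_r)$ and all $\lambda\in(0,\sqrt{\log\log q}]$. The conceptually delicate ingredient, though, is the symmetry bookkeeping in the Gaussian expansion: it is precisely the vanishing of the first‑order term in $R$ — and, when $r=2$, of the third‑order term as well — together with the exact evaluation of the second‑order coefficient as $F_{j,k}(\lambda)$ that upgrades the error from the $O(1/\log^2 q)$ one would naively expect to the asserted $O\bigl((\log\log q)^r/\log^3 q\bigr)$.
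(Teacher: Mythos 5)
Your proposal is correct, but it is organized quite differently from the paper's proof. The paper keeps $N(0,I)$ as the reference Gaussian throughout: it truncates the Fourier integral to $\|\vt\|\le A\sqrt{\log q}$ using Lemma~3.1, expands $\hat\mu(\vt/\sqrt{\Va})$ as $e^{-\|\vt\|^2/2}$ times $1$ plus polynomial corrections in the $B_q(a_j,a_k)/\Va$ (Lemma~3.3), and then evaluates the resulting integrals term by term via the Hermite identities of Lemma~3.4, with the reflection symmetry of Lemma~3.5 killing the odd pieces on the $\vx$ side. You instead absorb all the $B_q$ information into the covariance matrix $M=I+R$, prove a sup-norm density comparison $\sup_\vx|g_\nu(\vx)-g_M(\vx)|\ll_r\Va^{-1}$ by the same $\log J_0$-expansion plus Lemma~3.1, and then carry out a purely classical expansion of the Gaussian integral $\int_{\|\vx\|>\lambda}g_M$ in powers of $R$, where the same symmetry argument kills the linear and ``triangle'' cubic terms. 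The two approaches do essentially the same analytic work but factor it differently: yours isolates a clean intermediate statement (the density bound, and more generally the closeness of $\nu$ to $N(0,M)$) and pushes the combinatorics of the $B_q$'s into a finite-dimensional Gaussian computation; the paper's keeps everything on the Fourier side and never has to expand $\det M$ or $M^{-1}$. A small bonus of your route is that the error it actually produces, $O_r\bigl((\max_{j<k}|B_q(a_j,a_k)|/\Va)^3\bigr)+O_r\bigl((\log q)^{r-1}/\phi(q)\bigr)$, is a little sharper for $r\neq 3$ than the paper's stated $(\log\log q)^r/\log^3q$ (the paper loses the $(\log\log q)^r$ from the volume of its truncation box). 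Two points deserve a word of care in a full write-up: first, the Taylor expansion of $g_M/g_I$ is only pointwise controlled for $\|\vx\|\ll(\max|b_{jk}|)^{-1/2}$, which is shorter than your truncation radius $\log q$; this is harmless (integrate instead over the bounded ball $\|\vx\|<\lambda$ after writing $\int_{\|\vx\|>\lambda}(g_M-g_I)=-\int_{\|\vx\|<\lambda}(g_M-g_I)$, or use a one-variable Taylor formula for $u\mapsto\int_{\|\vx\|>\lambda}g_{I+uR}$), but it should be said. Second, the bound $\bigl|\sum_{\chi\neq\chi_0}\chi(b/a)\log q^*_\chi\bigr|\le\phi(q)\log 2$ is indeed correct and provable by CRT-factoring $\chi$ and exploiting the cancellation $\sum_{\chi_p}\chi_p(c)=0$, but it is a nontrivial computation (the paper instead simply cites Corollary~5.4 of \cite{La}, which gives the stronger $|B_q(a,b)|\ll\phi(q)$); your statement ``an elementary computation with the conductors'' undersells it a bit and would need to be spelled out.
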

\begin{proof}[Proof of Theorem 1]
First, it follows from Corollary 5.4 of \cite{La} that
$$ \max_{(a,b)\in \mathcal{A}(q)}|B_q(a,b)|\ll \phi(q).$$
On the other hand, Proposition 5.1 of \cite{La} yields
$$ |B_q(a,-a)|\gg \phi(q),$$
for all $(a,q)=1$. Hence, we deduce
\begin{equation}
\max_{(a,b)\in \mathcal{A}(q)}\frac{|B_q(a,b)|}{\Va}\asymp \frac{1}{\log q}.
\end{equation}
We also remark that this last estimate follows implicitly from the work of D. Fiorilli and G. Martin \cite{FiM}.

The first part of Theorem 1 follows from Theorem 3.2 upon using (3.2) and noting that
$F_{j,k}(\lambda)\ll_r 1$. Let $1\leq j<k\leq r$. If $1/4<\lambda<3/4$, then
$$
(2\pi)^{-r/2}\int_{||\vx||\leq \lambda}(x_j^2-1)(x_k^2-1)\exp\left(-\frac{x_1^2+\cdots+x_r^2}{2}\right)d\vx \geq \delta_{r},
$$
for some positive number $\delta_r$ which depends only on $r$.
Moreover, we have
$$ (2\pi)^{-r/2}\int_{\vx \in \mathbb{R}^r}(x_j^2-1)(x_k^2-1) \exp\left(-\frac{x_1^2+\cdots+x_r^2}{2}\right)d\vx=0,$$
since the antiderivative of $(x^2-1)e^{-x^2/2}$ is $-xe^{-x^2/2}.$ Hence we deduce, in the range  $1/4<\lambda<3/4$, that
\begin{equation}
F_{j,k}(\lambda)\leq -\delta_r, \text{ for all } 1\leq j<k\leq r.
\end{equation}
On the other hand, it follows from (3.2) that there exist distinct reduced residues $a_1,a_2$ modulo $q$, such that
\begin{equation}
\frac{B_q(a_1,a_2)^2}{\Va^2}\gg \frac{1}{\log^2q}.
\end{equation}
Let $a_3,\dots,a_r$ be distinct reduced residues modulo $q$ that are different from $a_1$ and $a_2$. Appealing to Theorem 3.2 along with (3.3) we get
\begin{equation*}
\begin{aligned}
\left|\mu_{q;a_1,\dots,a_r}\left(||\vx||>\lambda\sqrt{\Va}\right)-
(2\pi)^{-r/2}\int_{||\vx||>\lambda} \exp\left(-\frac{x_1^2+\cdots+x_r^2}{2}\right)d\vx\right|\\
\geq |F_{1,2}(\lambda)|\frac{B_q(a_1,a_2)^2}{2\Va^2}-\kappa_r\frac{(\log\log q)^r}{(\log q)^3}
\end{aligned}
\end{equation*}
for some $\kappa_r>0$ (which depends only on $r$),
if $q$ is sufficiently large. Combining this inequality with (3.3) and (3.4) completes the proof.
\end{proof}
The first step in the proof of Theorem 3.2 consists in using the explicit formula (3.1) to approximate the Fourier transform $\hat{\mu}_{q;a_1,\dots,a_r}(t_1,\dots, t_r)$ by a multivariate Gaussian in the range $||\vt||\ll \sqrt{\log q/\Va}$. To lighten the notation,  we set $\mu=\mu_{q;a_1,\dots, a_r}$ throughout the remaining part of this section.
\begin{lem} Assume GRH and LI. Fix an integer $r\geq 2$ and a real number $A=A(r)>0$. If  $||\vt||\leq A\sqrt{\log q}$, then
\begin{equation*}
 \begin{aligned}
 &\hat{\mu}\left(\frac{t_1}{\sqrt{\Va}},\dots,\frac{t_r}{\sqrt{\Va}}\right)
 =\exp\left(-\frac{t_1^2+\cdots+t_r^2}{2}\right)
 \bigg(1-\frac{1}{\Va}\sum_{1\leq j<k\leq r}B_q(a_j,a_k)t_jt_k\\
&+ \frac{1}{2\Va^2}\sum_{\substack{1\leq j_1<k_1\leq r\\ 1\leq j_2<k_2\leq r}}B_q(a_{j_1},a_{k_1})B_q(a_{j_2},a_{k_2})t_{j_1}t_{j_2}t_{k_1}t_{k_2} + O_r\left(\frac{||\vt||^6}{(\log q)^3}\right)\bigg).
 \end{aligned}
\end{equation*}
\end{lem}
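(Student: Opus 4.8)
The plan is to pass to logarithms in the explicit formula (3.1), Taylor-expand each Bessel factor, and then re-exponentiate. Writing $V:=\Va$ and $w_\chi:=\sum_{j=1}^r\chi(a_j)t_j$, formula (3.1) gives
$$\log\hat\mu\left(\frac{t_1}{\sqrt V},\dots,\frac{t_r}{\sqrt V}\right)=\frac{i}{\sqrt V}\sum_{j=1}^rC_q(a_j)t_j+\sum_{\substack{\chi\neq\chi_0\\\chi\bmod q}}\sum_{\gamma_\chi>0}\log J_0\left(\frac{2|w_\chi|}{\sqrt V\,\sqrt{\frac14+\gamma_\chi^2}}\right).$$
Since $|C_q(a)|\ll q^{o(1)}$ and, by Lemma 2.3, $V\asymp\phi(q)\log q$, the first term is $\ll q^{-1/2+o(1)}||\vt||$ and is absorbed into the error. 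For the double sum, the crucial preliminary observation is that every argument of $J_0$ is uniformly tiny: from $|w_\chi|\le\sqrt r\,||\vt||$ and $\frac14+\gamma_\chi^2\ge\frac14$ each argument is $\ll_r\sqrt{\log q}/\sqrt V\ll\phi(q)^{-1/2}=o(1)$, uniformly in $\chi$, in $\gamma_\chi$, and in $||\vt||\le A\sqrt{\log q}$. This justifies the termwise use of $\log J_0(z)=-\frac{z^2}{4}+O(z^4)$.

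I would then evaluate the leading quadratic contribution exactly. Using $|w_\chi|^2=\sum_j t_j^2+\sum_{j\neq k}\chi(a_j/a_k)t_jt_k$ (valid because $|\chi(a_j)|=1$ for $\chi\neq\chi_0$), the identity $\sum_{\chi\neq\chi_0}\sum_{\gamma_\chi>0}(\frac14+\gamma_\chi^2)^{-1}=\frac12\Va$, and the definition of $B_q$, one finds
$$-\sum_{\substack{\chi\neq\chi_0\\\chi\bmod q}}\sum_{\gamma_\chi>0}\frac{|w_\chi|^2}{V(\frac14+\gamma_\chi^2)}=-\frac{t_1^2+\cdots+t_r^2}{2}-\frac1V\sum_{1\le j<k\le r}B_q(a_j,a_k)t_jt_k;$$
put $u:=-V^{-1}\sum_{1\le j<k\le r}B_q(a_j,a_k)t_jt_k$. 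For the remainder terms, the crude inequality $(\frac14+\gamma_\chi^2)^{-2}\le 4(\frac14+\gamma_\chi^2)^{-1}$ gives $\sum_{\chi\neq\chi_0}\sum_{\gamma_\chi>0}(\frac14+\gamma_\chi^2)^{-2}\ll\Va$, so, using $|w_\chi|\ll_r||\vt||$, the total $O(z^4)$ contribution is $\ll_r||\vt||^4V^{-2}\cdot\Va\ll_r||\vt||^4/\Va$; since $\Va\asymp\phi(q)\log q$ and $\phi(q)$ grows faster than any fixed power of $\log q$, this — together with the phase term above — is $O_r(||\vt||^6/(\log q)^3)$ on our range. Hence $\log\hat\mu(\vt/\sqrt V)=-(t_1^2+\cdots+t_r^2)/2+u+O_r(||\vt||^6/(\log q)^3)$.

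Finally I would re-exponentiate. From $|B_q(a,b)|\ll\phi(q)$ and $\Va\asymp\phi(q)\log q$ one gets $u\ll_r||\vt||^2/\log q=O_r(1)$ throughout the range, hence $|u|^3\ll_r||\vt||^6/(\log q)^3$; writing $\hat\mu(\vt/\sqrt V)=\exp\big(-(t_1^2+\cdots+t_r^2)/2\big)\exp\big(u+O_r(||\vt||^6/(\log q)^3)\big)$ and expanding the second factor as $1+u+\frac12u^2+O_r\big(|u|^3+||\vt||^6/(\log q)^3\big)$ gives exactly the claimed formula, once one writes out $\frac12u^2=\frac1{2\Va^2}\sum_{1\le j_1<k_1\le r}\sum_{1\le j_2<k_2\le r}B_q(a_{j_1},a_{k_1})B_q(a_{j_2},a_{k_2})t_{j_1}t_{j_2}t_{k_1}t_{k_2}$. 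I expect the main obstacle to be the uniform control of the Bessel arguments in the first step — one needs every argument to be genuinely $o(1)$, so that the Taylor remainders are summable term by term, and one needs $\Va$ to beat $(\log q)^3$ with room to spare, which is exactly where the polynomial size of $\phi(q)$ enters. By contrast, the algebraic identification of the quadratic sum with $B_q$ is routine, although one has to track which ratio $a_j/a_k$ appears in order to match the definition of $B_q$ precisely.
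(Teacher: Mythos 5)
Your proposal is correct and follows essentially the same route as the paper: take logarithms in formula (3.1), absorb the $C_q$ phase into the error using $|C_q(a)|=q^{o(1)}$ and $\Va\asymp\phi(q)\log q$, apply $\log J_0(z)=-z^2/4+O(z^4)$ termwise with $\sum_{\chi,\gamma}(\frac14+\gamma_\chi^2)^{-2}\ll\Va$, identify the quadratic form with $-(t_1^2+\cdots+t_r^2)/2-\Va^{-1}\sum_{j<k}B_q(a_j,a_k)t_jt_k$, and finally re-exponentiate the off-diagonal piece $u$ using $|u|\ll_r||\vt||^2/\log q$ from (3.2). Your explicit verification that each Bessel argument is $o(1)$ uniformly is a slightly more careful articulation of what the paper leaves implicit, but the decomposition, error bookkeeping, and algebraic identification are identical.
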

\begin{proof} Using that $\Va\sim \phi(q)\log q$ and $|C_q(a)|=q^{o(1)}$, we infer from (3.1) that
$$ \log\hat{\mu}\left(\frac{t_1}{\sqrt{\Va}},\dots,\frac{t_r}{\sqrt{\Va}}\right)= \sum_{\substack{\chi\neq \chi_0\\ \chi\bmod q}}\sum_{\gamma_{\chi}>0}\log J_0\left(\frac{2\left|\sum_{j=1}^r\chi(a_j)t_j\right|}
{\sqrt{\frac14+\gamma_{\chi}^2}\sqrt{\Va}}\right) + O_{r}\left(\frac{||\vt||}{\Va^{1/4}}\right).$$
Moreover, since $\log J_0(s)=-s^2/4+O(s^4)$ for $|s|\leq 1$, and
 $\sum_{\substack{\chi\neq \chi_0\\ \chi\bmod  q}}\sum_{\gamma_{\chi}>0}1/(\frac{1}{4}+\gamma_{\chi}^2)^2\leq 2 \Va,$ then
\begin{equation}
\log\hat{\mu}\left(\frac{t_1}{\sqrt{\Va}},\dots,\frac{t_r}{\sqrt{\Va}}\right)= -\frac{1}{\Va}\sum_{\substack{\chi\neq \chi_0\\ \chi\bmod q}} \sum_{\gamma_{\chi}>0}\frac{\left|\sum_{j=1}^r\chi(a_j)t_j\right|^{2}}
{\frac14+\gamma_{\chi}^2}+O_r\left(\frac{||\vt||}{\Va^{1/4}}\right),
\end{equation}
in the range $||\vt||\leq A\sqrt{\log q}$. Furthermore, the main term on the RHS of (3.5) equals
\begin{equation}
\begin{aligned}
&-\frac{1}{\Va}\sum_{\substack{\chi\neq \chi_0\\ \chi\bmod q}}\sum_{\gamma_{\chi}>0}\frac{1}{\frac14+\gamma_{\chi}^2}\sum_{1\leq j,k\leq r}\chi(a_j)\overline{\chi(a_k)}t_jt_k\\
&=-\frac{t_1^2+\dots+t_r^2}{2}
-\frac{1}{\Va}\sum_{1\leq j<k\leq r}B_q(a_j,a_k)t_jt_k.\\
\end{aligned}
\end{equation}
On the other hand, in our range of $\vt$, we have
\begin{equation*}
\begin{aligned}
\exp\left(-\sum_{1\leq j<k\leq r}\frac{B_q(a_j,a_k)}{\Va}t_jt_k\right)=& 1-\frac{1}{\Va}\sum_{1\leq j<k\leq r}B_q(a_j,a_k)t_jt_k\\
&+\frac{1}{2\Va^2}\left(\sum_{1\leq j<k\leq r}B_q(a_j,a_k)t_jt_k\right)^2 +O_r\left(\frac{||\vt||^6}{\log^3q}\right),
\end{aligned}
\end{equation*}
which follows from (3.2). Combining this estimate with (3.5) and (3.6) completes the proof.
\end{proof}Let $\Phi(x)=e^{-x^2/2}$ and denote by $\Phi^{(n)}$ the $n$-th derivative of $\Phi$. Then $\Phi^{(1)}(x)=-xe^{-x^2/2}$, $\Phi^{(2)}(x)=(x^2-1)e^{-x^2/2}$, and more generally we know that
$\Phi^{(n)}(x)= (-1)^nH_n(x)e^{-x^2/2}$ where $H_n$ is the $n$-th Hermite polynomial.
We record the following result, which corresponds to Lemma 4.2 of \cite{La}.
\begin{lem}
Let $n_1,\dots, n_r$ be fixed non-negative integers, and $M$ be a large positive number. Then for any  $(x_1,\dots,x_r)\in \mathbb{R}^r,$ we have
$$ \int_{||\mathbf{t}||<M}e^{i(t_1x_1+\cdots+t_rx_r)}
\prod_{j=1}^rt_j^{n_j}e^{-t_j^2/2}
d\mathbf{t}= (2\pi)^{r/2}\prod_{j=1}^r i^{n_j}H_{n_j}(x_j)e^{-x_j^2/2} +O\left(e^{-M^2/4}\right).$$
\end{lem}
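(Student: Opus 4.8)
The plan is to compute the multidimensional Fourier integral by separating it into a product of one-dimensional integrals, extending each to the whole real line, and controlling the error introduced by the truncation at $||\vt|| < M$. First I would observe that the restriction $||\vt|| < M$ is awkward because the integrand factors as a product over $j$, whereas the region of integration does not. So the natural first step is to replace the ball $\{||\vt|| < M\}$ by the cube $\{|t_j| < M/\sqrt{r} \text{ for all } j\}$ — or, more cleanly, to pass directly to the full-space integral $\int_{\mathbb{R}^r}$ and estimate the tail. Since each factor $t_j^{n_j} e^{-t_j^2/2}$ decays like a Gaussian, the contribution of the region $||\vt|| \geq M$ is $\ll \int_{||\vt|| \geq M} \prod_j |t_j|^{n_j} e^{-t_j^2/2}\, d\vt$; bounding $||\vt|| \geq M$ by the event that some $|t_j| \geq M/\sqrt{r}$ and using $\int_{|u| \geq c} |u|^{n} e^{-u^2/2}\, du \ll_n e^{-c^2/3}$ (say), this tail is $\ll_{n_1,\dots,n_r} e^{-M^2/(3r)}$, which after adjusting constants is absorbed into $O(e^{-M^2/4})$ provided one is slightly more careful with the exponent — in fact one should split off only the single coordinate of largest absolute value, so that the Gaussian factor in that one coordinate already gives $e^{-(M/\sqrt r)^2/2}$ while the others are integrated freely, giving $\ll_{\vec n} e^{-M^2/(2r)}$; replacing $r$-dependence in the exponent is harmless since the lemma allows $O(e^{-M^2/4})$ with an implied constant and $M$ large, but to be safe one chooses the cube $\{|t_j| < M\}$ rather than $\{|t_j| < M/\sqrt r\}$ at the cost of enlarging the region, which only helps. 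I would state it with the cube contained in the ball replaced appropriately; the bookkeeping of which constant appears in the exponent is the one place to be attentive, but it is routine.

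Once reduced to $\int_{\mathbb{R}^r} e^{i\sum_j t_j x_j} \prod_j t_j^{n_j} e^{-t_j^2/2}\, d\vt = \prod_{j=1}^r \int_{\mathbb{R}} e^{i t_j x_j} t_j^{n_j} e^{-t_j^2/2}\, dt_j$, the problem becomes the classical one-dimensional identity
$$ \int_{\mathbb{R}} e^{i t x} t^{n} e^{-t^2/2}\, dt = \sqrt{2\pi}\, i^{n} H_n(x) e^{-x^2/2}. $$
This follows by differentiating the Gaussian Fourier transform $\int_{\mathbb{R}} e^{itx} e^{-t^2/2}\, dt = \sqrt{2\pi}\, e^{-x^2/2}$ a total of $n$ times with respect to $x$: each differentiation brings down a factor $it$ inside the integral, so the left side becomes $i^{-n} \frac{d^n}{dx^n}$ applied... — more precisely, $\frac{d^n}{dx^n}\int e^{itx} e^{-t^2/2} dt = \int (it)^n e^{itx} e^{-t^2/2}\, dt = i^n \int t^n e^{itx} e^{-t^2/2}\, dt$, while $\frac{d^n}{dx^n}(\sqrt{2\pi} e^{-x^2/2}) = \sqrt{2\pi}(-1)^n H_n(x) e^{-x^2/2}$ by the standard definition of the Hermite polynomials; solving for the integral gives $\int t^n e^{itx} e^{-t^2/2}\, dt = \sqrt{2\pi}\, i^{-n}(-1)^n H_n(x) e^{-x^2/2} = \sqrt{2\pi}\, i^{n} H_n(x) e^{-x^2/2}$ since $i^{-n}(-1)^n = i^{-n} i^{2n} = i^{n}$. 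Differentiation under the integral sign is justified by the rapid decay of $t^k e^{-t^2/2}$ for every $k$. Multiplying the $r$ one-dimensional evaluations together yields the main term $(2\pi)^{r/2}\prod_j i^{n_j} H_{n_j}(x_j) e^{-x_j^2/2}$.

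Combining the two steps: the truncated integral equals the full-space integral minus the tail, the full-space integral equals the stated main term exactly, and the tail is $O(e^{-M^2/4})$ uniformly in $(x_1,\dots,x_r) \in \mathbb{R}^r$ (the tail bound does not involve the $x_j$ at all, since $|e^{i\sum t_j x_j}| = 1$). This gives the lemma. The only genuine obstacle is the first step — ensuring the truncation error is genuinely $O(e^{-M^2/4})$ with no hidden dependence on the $x_j$ and with the correct constant in the exponent; since $r$ and the $n_j$ are fixed, this is just a matter of choosing the splitting of the tail region carefully (isolating the largest coordinate), and presents no real difficulty. Since this lemma is quoted verbatim from Lemma 4.2 of \cite{La}, one may alternatively simply cite it, but the self-contained argument above is short enough to include.
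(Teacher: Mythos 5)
The paper itself does not prove this lemma; it cites it verbatim as Lemma 4.2 of \cite{La}, so there is no in-paper argument to compare against. Your self-contained route --- factor the integrand, evaluate each one-dimensional Fourier integral by differentiating the Gaussian Fourier transform $n_j$ times under the integral sign, and control the truncation to the ball --- is the natural one, and your derivation of the one-dimensional identity $\int_{\mathbb{R}} t^{n} e^{itx} e^{-t^2/2}\,dt = \sqrt{2\pi}\,i^{n} H_n(x) e^{-x^2/2}$ is correct, including the sign bookkeeping $i^{-n}(-1)^n = i^n$, and it matches the paper's convention $\Phi^{(n)}(x) = (-1)^n H_n(x)e^{-x^2/2}$.

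There is, however, a genuine gap in the tail estimate, which you half-notice but never close. Isolating the coordinate of largest modulus on $\{||\vt||\geq M\}$ gives, as you compute, a bound of order $e^{-M^2/(2r)}$. For $r \geq 3$ this is strictly larger than $e^{-M^2/4}$: the ratio $e^{-M^2/(2r)} / e^{-M^2/4} = e^{M^2(1/4 - 1/(2r))}$ tends to infinity with $M$ once $r>2$. So the claim that ``replacing $r$-dependence in the exponent is harmless'' is false --- the exponent is the entire content of the error term --- and the remark about ``choosing the cube $\{|t_j|<M\}$'' does not repair it, since the lemma is stated for the ball and switching the domain changes the assertion rather than proving it. The correct and standard fix, which you did not use, is to split the Gaussian in half: on $\{||\vt||\geq M\}$ write $e^{-||\vt||^2/2} = e^{-||\vt||^2/4}\,e^{-||\vt||^2/4} \leq e^{-M^2/4}\,e^{-||\vt||^2/4}$, whence
$$ \int_{||\vt||\geq M}\prod_{j=1}^r |t_j|^{n_j} e^{-t_j^2/2}\,d\vt \ \leq\ e^{-M^2/4}\int_{\mathbb{R}^r}\prod_{j=1}^r |t_j|^{n_j} e^{-t_j^2/4}\,d\vt \ =\ O_{n_1,\dots,n_r}\!\left(e^{-M^2/4}\right), $$
uniformly in $(x_1,\dots,x_r)$ since $|e^{i(t_1x_1+\cdots+t_rx_r)}|=1$. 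With this replacement the rest of your argument goes through and gives exactly the stated error term.
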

Our last ingredient to the proof of Theorem 3.2 is the following basic lemma
\begin{lem} If $f_1, \dots, f_r$ are real valued functions, such that $f_i$ is odd for some $1\leq i\leq r$ then
$$ \int_{V_1<||x||<V_2}f_1(x_1)\cdots f_r(x_r)\exp\left(-\frac{x_1^2+\cdots+x_r^2}{2}\right)d\vx= 0,$$
for all $V_2>V_1\geq 0$.
\end{lem}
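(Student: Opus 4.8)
The plan is to use the reflection symmetry of the region and of the Gaussian weight in the coordinate $x_i$ for which $f_i$ is odd. Fix such an index $i$, and let $\sigma : \mathbb{R}^r \to \mathbb{R}^r$ be the map sending $(x_1,\dots,x_i,\dots,x_r)$ to $(x_1,\dots,-x_i,\dots,x_r)$. First I would record three elementary facts: $\sigma$ is an orthogonal involution and hence preserves Lebesgue measure; the region $\{\vx : V_1 < ||\vx|| < V_2\}$ is $\sigma$-invariant, because $||\vx||^2 = x_1^2 + \cdots + x_r^2$ depends on $x_i$ only through $x_i^2$; and the weight $\exp\!\left(-\frac{x_1^2+\cdots+x_r^2}{2}\right)$ is likewise $\sigma$-invariant.

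Next I would substitute $\vx \mapsto \sigma(\vx)$ in the integral, call it $I$, that appears in the statement. By the three facts above, the domain, the Lebesgue measure, and the exponential factor are all unaffected, whereas the product $f_1(x_1)\cdots f_r(x_r)$ is sent to $f_1(x_1)\cdots f_{i-1}(x_{i-1})\,f_i(-x_i)\,f_{i+1}(x_{i+1})\cdots f_r(x_r) = -\,f_1(x_1)\cdots f_r(x_r)$, since $f_i$ is odd. Therefore $I = -I$, and so $I = 0$, which is the assertion. If one prefers not to presuppose that $I$ converges, one can instead split the region of integration according to the sign of $x_i$ and observe that $\sigma$ carries one piece to the negative of the other; this is harmless in every application in the paper, where the $f_j$ are polynomials (of Hermite type), so the Gaussian decay makes all the integrals involved absolutely convergent.

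There is essentially no obstacle here: the whole argument is the one-line observation that an odd function integrated against an even weight over a symmetric domain gives zero. The only point requiring (minimal) care is that $\{V_1 < ||\vx|| < V_2\}$ is symmetric under reflection in each coordinate axis separately --- immediate from the definition of $||\vx||$ --- so that the substitution $\sigma$ legitimately fixes the domain of integration.
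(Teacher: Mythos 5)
Your proof is correct and is essentially identical to the paper's: both perform the change of variables $y_i = -x_i$ (your $\sigma$) and use the oddness of $f_i$ together with the invariance of the domain, the Gaussian weight, and Lebesgue measure to conclude $I = -I$. Your remarks about convergence and about the domain's symmetry in each coordinate are fine but not strictly necessary additions.
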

\begin{proof}
Using the change of variables $y_j=x_j$ if $j\neq i$ and $y_i=-x_i$, we deduce that the integral we seek to evaluate equals
$$ -\int_{V_1<||y||<V_2}f_1(y_1)\cdots f_r(y_r)\exp\left(-\frac{y_1^2+\cdots+y_r^2}{2}\right)d\vy,$$
since $f_i$ is odd, which establishes the lemma.
\end{proof}

\begin{proof}[Proof of Theorem 3.2]
Let $R:=\sqrt{\Va}\log\log q.$  First, appealing to Theorem 2 we get
\begin{equation}
\mu\left(||\vy||>\lambda\sqrt{\Va}\right)= \mu\left(\lambda\sqrt{\Va}<||\vy||<R\right)+ O\left(\exp\left(-(\log\log q)^{3/2}\right)\right),
\end{equation}
if $q$ is sufficiently large.
Next, we apply the Fourier inversion formula to the measure $\mu$, which yields
$$\mu\left(\lambda\sqrt{\Va}<||\vy||<R\right)
=(2\pi)^{-r}\int_{\lambda\sqrt{\Va}<||\vy||<R}\int_{\vs\in \mathbb{R}^r}e^{i(s_1y_1+\cdots+s_ry_r)}\hat{\mu}(s_1,\dots, s_r)d\vs d\vy.$$
Let $A= A(r)\geq r$ be a suitably large constant. Then, using Lemma 3.1 with $\epsilon:= A(\Va)^{-1/2}\sqrt{\log q}$ we derive
$$  \int_{\mathbf{s}\in \mathbb{R}^r}e^{i(s_1y_1+\cdots+s_ry_r)}\hat{\mu}(s_1,\dots, s_r)d\mathbf{s}= \int_{||\vs||\leq \epsilon}e^{i(s_1y_1+\cdots+s_ry_r)}\hat{\mu}(s_1,\dots, s_r)d\mathbf{s}+ O\left(\frac{1}{q^{2A}}\right).$$
Collecting the above estimates gives
\begin{equation*}
\begin{aligned}
\mu\left(||\vy||>\lambda\sqrt{\Va}\right)=& (2\pi)^{-r}\int_{\lambda\sqrt{\Va}<||\vy||<R}\int_{||\vs||\leq \epsilon}e^{i(s_1y_1+\cdots+s_ry_r)}\hat{\mu}(s_1,\dots, s_r)d\vs d\vy\\
& +  O\left(\exp\left(-(\log\log q)^{3/2}\right)\right),\\
\end{aligned}
\end{equation*}
using that $R^rq^{-2A}\ll q^{-r}.$ Upon making the change of variables
$$ t_j:=  \sqrt{\Va} s_j, \text{ and } x_j:= \frac{y_j}{\sqrt{\Va}},$$
we infer that $\mu\left(||\vx||>\lambda\sqrt{\Va}\right)$ equals
\begin{equation}
\begin{aligned}
& (2\pi)^{-r}\int_{\lambda<||\vx||<\log\log q}\int_{||\vt||\leq A\sqrt{\log q}}e^{i(t_1x_1+\cdots+t_rx_r)}\hat{\mu}\left(\frac{t_1}{\sqrt{\Va}},\dots, \frac{t_r}{\sqrt{\Va}}\right)d\vt d\vx\\
& +  O\left(\exp\left(-(\log\log q)^{3/2}\right)\right),\\
\end{aligned}
\end{equation}
Now we use the asymptotic expansion of $\hat{\mu}\left(t_1\Va^{-1/2},\dots, t_r\Va^{-1/2}\right)$ proved in Lemma 3.3. First, the contribution of the error term in this asymptotic to the integral in (3.8) is
$$ \ll_r \frac{(\log\log q)^r}{(\log q)^3}.$$
Next we shall compute the contribution of the main terms. Appealing to Lemma 3.4, we obtain
\begin{equation*}
\begin{aligned}
&(2\pi)^{-r}\int_{\lambda<||\vx||<\log\log q}\int_{||\vt||\leq A\sqrt{\log q}}e^{i(t_1x_1+\cdots+t_rx_r)}
\exp\left(-\frac{t_1^2+\cdots+t_r^2}{2}\right)d\vt d\vx\\
&=(2\pi)^{-r/2}\int_{\lambda<||\vx||<\log\log q}\exp\left(-\frac{x_1^2+\cdots+x_r^2}{2}\right) d\mathbf{x}+O\left(\frac{1}{q^A}\right)\\
&=(2\pi)^{-r/2}\int_{||\vx||>\lambda}\exp\left(-\frac{x_1^2+\cdots+x_r^2}{2}
\right)d\mathbf{x}+O\left(\exp\left(-(\log\log q)^{3/2}\right)\right),\\
\end{aligned}
\end{equation*}
if $q$ is large enough.
Similarly, we deduce from Lemma 3.4 that for $1\leq j<k\leq r$ we have
\begin{equation*}
\begin{aligned}
&(2\pi)^{-r}\int_{\lambda<||\vx||<\log\log q}\int_{||\vt||\leq A\sqrt{\log q} }t_jt_ke^{i(t_1x_1+\cdots+t_rx_r)}\exp\left(-\frac{t_1^2+\cdots+t_r^2}{2}\right)
d\vt d\vx\\
&= -(2\pi)^{-r/2}\int_{\lambda<||\vx||<\log\log q}x_jx_k\exp\left(-\frac{x_1^2+\cdots+x_r^2}{2}\right) d\mathbf{x}+O\left(\frac{1}{q^A}\right)= O\left(\frac{1}{q^A}\right),\\
\end{aligned}
\end{equation*}
which follows from Lemma 3.5. Furthermore, if $1\leq j_1<k_1\leq r$ and $1\leq j_2<k_2\leq r$, then a similar argument along with Lemmas 3.4 and 3.5 shows that
$$
(2\pi)^{-r}\int_{\lambda<||\vx||<\log\log q}\int_{||\vt||\leq A\sqrt{\log q} }t_{j_1}t_{k_1}t_{j_2}t_{k_2}e^{i(t_1x_1+\cdots+t_rx_r)}\exp\left(-\frac{t_1^2+\cdots+t_r^2}{2}\right)
d\vt d\vx
$$
is $\ll q^{-A}$ if $j_1\neq j_2$ or $k_1\neq k_2$, and equals
$F_{j,k}(\lambda)+O\left(\exp\left(-(\log\log q)^{3/2}\right)\right),$
if $j_1=j_2=j$ and $k_1=k_2=k.$
The theorem now follows upon collecting the above estimates and using Lemma 3.3.
\end{proof}

\section{Very large deviations of $\mu_{q;a_1,\dots,a_r}$: Proof of Theorem 4}
In this section we prove a precise estimate for $\mu_{q;a_1,\dots,a_r}(|x|_{\infty}>V)$ in the range $V/(\phi(q)\log^2q)\to\infty$ as $q\to\infty.$ The advantage of using the maximum norm is that the bounds in Lemma 2.1 can be made sharp, so that a precise estimate for large deviations of $\mu_{q;a_1,\dots,a_r}$ would follow from a close investigation of the tails of $Y(q)$. Indeed, we have
\begin{lem} Let $\epsilon>0$ be small. If $q$ is sufficiently large, $2\leq r\leq \phi(q)-1$ and $V\geq \sqrt{\phi(q)}$, then for all distinct reduced residues $a_1,\dots,a_r$ modulo $q$, we have
$$
 \pr\left(Y(q)> V+q^{\epsilon}\right)\leq \mu_{q;a_1,\dots,a_r}(|x|_{\infty}>V)\leq 2r\pr\left(Y(q)> V-q^{\epsilon}\right).
$$
\end{lem}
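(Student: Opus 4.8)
The plan is to repeat the argument behind Lemma 2.1, now exploiting the exact identity $\{|\vx|_{\infty}>V\}=\bigcup_{j=1}^{r}\{|X(q,a_j)|>V\}$: with the maximum norm the event that the vector is large is literally a union of $r$ one-dimensional events, so it is handled by a union bound, costing only a factor $r$ instead of the rescaling by $\sqrt r$ that the Euclidean norm forces in Lemma 2.1. I would first recall the structural facts already in place: $\mu_{q;a_1,\dots,a_r}$ is the law of $X_{q;a_1,\dots,a_r}=(X(q,a_1),\dots,X(q,a_r))$; for each reduced residue $a$ one has $X(q,a)=Y(q,a)-C_q(a)$, where $Y(q,a)$ is equal in distribution to the symmetric random variable $Y(q)$ of (2.2); and $|C_q(a)|<d(q)=q^{o(1)}$, so $|C_q(a)|<q^{\epsilon}$ once $q$ is large in terms of $\epsilon$.

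For the lower bound I would keep only the first coordinate. Since $\{|\vx|_{\infty}>V\}\supseteq\{X(q,a_1)>V\}$,
\[
\mu_{q;a_1,\dots,a_r}(|\vx|_{\infty}>V)\ \ge\ \pr\bigl(X(q,a_1)>V\bigr)=\pr\bigl(Y(q)>V+C_q(a_1)\bigr)\ \ge\ \pr\bigl(Y(q)>V+q^{\epsilon}\bigr),
\]
the middle equality because $X(q,a_1)=Y(q,a_1)-C_q(a_1)$ with $Y(q,a_1)\stackrel{d}{=}Y(q)$, and the last inequality because $C_q(a_1)<q^{\epsilon}$ makes the threshold $V+C_q(a_1)$ no larger than $V+q^{\epsilon}$.

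For the upper bound I would apply the union bound to $\{|\vx|_{\infty}>V\}=\bigcup_{j=1}^{r}\{|X(q,a_j)|>V\}$ and then absorb the shift $C_q(a_j)$. Since $|X(q,a_j)|>V$ forces $|Y(q,a_j)|\ge|X(q,a_j)|-|C_q(a_j)|>V-q^{\epsilon}$, we obtain
\[
\mu_{q;a_1,\dots,a_r}(|\vx|_{\infty}>V)\ \le\ \sum_{j=1}^{r}\pr\bigl(|Y(q,a_j)|>V-q^{\epsilon}\bigr)=r\,\pr\bigl(|Y(q)|>V-q^{\epsilon}\bigr).
\]
For $q$ large the threshold $V-q^{\epsilon}$ is positive, because $V\ge\sqrt{\phi(q)}$ while $q^{\epsilon}=o(\sqrt{\phi(q)})$ for $\epsilon$ small, so the symmetry of $Y(q)$ gives $\pr(|Y(q)|>V-q^{\epsilon})=2\,\pr(Y(q)>V-q^{\epsilon})$, which is exactly the claimed bound $2r\,\pr(Y(q)>V-q^{\epsilon})$.

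I do not expect a genuine technical obstacle: the whole lemma rests on a union bound and the elementary size estimate $C_q(a)<d(q)=q^{o(1)}$, both already available, and the restriction $2\le r\le\phi(q)-1$ plays no role in the argument. The point worth stressing is conceptual: unlike in Lemma 2.1, where the Euclidean ball of radius $V$ only contains the cube of half-side $V/\sqrt r$, here the two sides of the inequality differ merely by the shift $V\mapsto V\pm q^{\epsilon}$ in the argument of the tail together with the innocuous factor $2r$. In the regime $V/(\phi(q)\log^{2}q)\to\infty$ the perturbation $q^{\epsilon}$ is negligible compared with $V$, so these bounds are tight enough to transfer the saddle-point asymptotic for $\pr(Y(q)>V)$, to be established later, into the precise formula for $\mu_{q;a_1,\dots,a_r}(|\vx|_{\infty}>V)$ in Theorem 4.
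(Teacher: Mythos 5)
Your argument is correct and follows the same route the paper sketches: for the lower bound restrict to the first coordinate and translate $X(q,a_1)>V$ into $Y(q)>V+C_q(a_1)\geq$ the event $Y(q)>V+q^{\epsilon}$ via $|C_q(a_1)|<q^{\epsilon}$, and for the upper bound use the union bound over the coordinates, absorb the shifts $C_q(a_j)$, and invoke the symmetry of $Y(q)$ to get the factor $2r$. This is exactly the proof the paper intends when it says to argue as in Lemma 2.1 using the displayed union-bound inequality and $|C_q(a)|=q^{o(1)}$.
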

\begin{proof} The result can be derived along the same lines as in the proof of Lemma 2.1 upon noting that
$$\mu_{q;a_1,\dots,a_r}(|x|_{\infty}>V)\leq \sum_{j=1}^r\pr\left(|X(q,a_j)|> V\right)\leq \sum_{j=1}^r\pr\left(|Y(q)|> V-|C_q(a_j)|\right),$$
and $|C_q(a)|=q^{o(1)}$ for all $(a,q)=1$.
\end{proof}
In view of this lemma, it suffices to obtain the analogous estimate for
$$ \rho_q(V):=\pr(Y(q)> V).$$
Since $\ex(\exp(t\cos(2\pi\theta(\gamma_{\chi}))))=I_0(t)$, then
\begin{equation}\mathcal{L}(s):=\int_{-\infty}^{\infty}se^{sV} \rho_q(V)dV=\prod_{\substack{\chi\neq \chi_0\\ \chi\bmod q}}\prod_{\gamma_{\chi}>0}I_0
\left(\frac{2s}{\sqrt{\frac14+\gamma_{\chi}^2}}\right), \text{ for all } s>0.
\end{equation}
To prove Theorem 4, we shall establish an asymptotic formula for  $\log \mathcal{L}(s)$ in a wide range of $s$, and then use the saddle-point method to extract the desired estimate for $\rho_{q}(V)$ from that of $\mathcal{L}(s)$. We first collect some useful estimates and properties of the Bessel function $I_0(t)$.
\begin{lem}
 $\log I_0(t)$ is a smooth function with bounded derivative on $[0,+\infty)$
and satisfies
$$
\log I_0(t)=\begin{cases} O\left(t^2\right) & \text{ if }  0\leq t<1\\  t+O\left(\log3t\right) & \text{ if }  t\geq 1. \end{cases}
$$
\end{lem}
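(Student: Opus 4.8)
The plan is to verify the three assertions --- that $\log I_0$ is $C^\infty$ on $[0,+\infty)$, that $(\log I_0)'$ is bounded there, and that $\log I_0(t)=O(t^2)$ for $0\le t<1$ while $\log I_0(t)=t+O(\log 3t)$ for $t\ge1$ --- directly from the power series $I_0(t)=\sum_{n\ge0}(t/2)^{2n}/n!^2$ together with the identity $I_0(t)=\ex\big(e^{t\cos(2\pi\theta)}\big)$ recorded before (4.1) (equivalently $I_0(t)=\tfrac1\pi\int_0^\pi e^{t\cos\phi}\,d\phi$), $\theta$ being uniform on $[0,1]$. Since $I_0$ is entire with nonnegative Taylor coefficients and constant term $1$, we have $I_0(t)\ge1$ for all $t\ge0$, so $\log I_0$ is well-defined and real-analytic on $[0,+\infty)$. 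Differentiating the integral representation gives $(\log I_0)'(t)=I_0'(t)/I_0(t)$ with $|I_0'(t)|=\big|\tfrac1\pi\int_0^\pi\cos\phi\,e^{t\cos\phi}\,d\phi\big|\le\tfrac1\pi\int_0^\pi e^{t\cos\phi}\,d\phi=I_0(t)$, hence $|(\log I_0)'(t)|\le1$ throughout, which is the asserted boundedness.

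For the size estimates I would split at $t=1$. When $0\le t<1$, the bounds $(t/2)^{2n}\le(t^2/4)(1/4)^{n-1}$ and $n!^{-2}\le1$ for $n\ge1$ give $1\le I_0(t)\le1+\tfrac{t^2}{4}\sum_{n\ge1}(1/4)^{n-1}=1+\tfrac{t^2}{3}$, so $0\le\log I_0(t)\le t^2/3=O(t^2)$. When $t\ge1$, the upper bound $\log I_0(t)\le t$ is immediate from $I_0(t)=\ex\big(e^{t\cos(2\pi\theta)}\big)\le e^t$, and for the matching lower bound I would use a crude Laplace estimate: since $\cos\phi\ge1-\phi^2/2$ and $t^{-1/2}\le1<\pi$,
$$I_0(t)\ge\frac1\pi\int_0^{t^{-1/2}}e^{t\cos\phi}\,d\phi\ge\frac{e^t}{\pi}\int_0^{t^{-1/2}}e^{-t\phi^2/2}\,d\phi=\frac{e^t}{\pi\sqrt t}\int_0^1 e^{-u^2/2}\,du,$$
whence $\log I_0(t)\ge t-\tfrac12\log t-C_0$ for an absolute constant $C_0$. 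Combining the two directions yields $\log I_0(t)=t+O(\log 3t)$ for $t\ge1$.

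I do not expect any real obstacle here; the only point needing a little care is the bookkeeping near $t=1$, where one uses that $\log3t\ge\log3>1$ so that additive constants and the factor $\tfrac12$ can be absorbed into the $O(\log3t)$ term, and that the two regime estimates match consistently at the breakpoint. If preferred, the classical asymptotic $I_0(t)=e^t(2\pi t)^{-1/2}\big(1+O(1/t)\big)$ as $t\to\infty$ supplies the $t\ge1$ estimate immediately, but the elementary argument above keeps the lemma self-contained.
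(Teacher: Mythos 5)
Your proof is correct and takes essentially the same route as the paper: both treat the $0\le t<1$ case via the power series, establish $|(\log I_0)'|\le 1$ via the integral representation $I_0(t)=\int_0^1 e^{t\cos(2\pi\theta)}\,d\theta$, and get $\log I_0(t)\le t$ from $I_0(t)\le e^t$, with the only difference being cosmetic in the lower bound for $t\ge1$ --- you use a Laplace-type truncation of the Gaussian (giving $I_0(t)\gg e^t/\sqrt t$), whereas the paper simply bounds the integrand from below on the short interval $[0,\tfrac{1}{2\pi t}]$ by its value at the right endpoint (giving $I_0(t)\ge e^t/(10\pi t)$); both losses are $O(\log 3t)$.
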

\begin{proof}
The first estimate follows from the Taylor series
$I_0(t)= \sum_{n=0}^{\infty}
(t/2)^{2n}/n!^2.$
Moreover, we have
\begin{equation}
I_0(t)=\int_{0}^{1}e^{t\cos(2\pi\theta)}d\theta\leq e^t.
\end{equation} On the other hand, taking $\epsilon=\frac{1}{2\pi t}$ we deduce
$$ I_0(t)\geq \int_{0}^{\epsilon}e^{t\cos(2\pi\theta)}d\theta\geq \epsilon e^{t\cos(2\pi\epsilon)}\geq \frac{e^t}{10\pi t}.$$
This together with (4.2) yields the second estimate.
Finally, since $I_0(t)$ is a positive smooth function on $[0,+\infty)$ then $\log I_0(t)$ is smooth and
$$ |(\log I_0(t))'|= \left|\frac{\int_{0}^{1}\cos(2\pi\theta) e^{t\cos(2\pi\theta)}d\theta}{\int_{0}^{1} e^{t\cos(2\pi\theta)}d\theta}\right|\leq 1.$$
\end{proof}
 Let $f(t)$ be the real valued function defined by
$$ f(t)=\begin{cases} \log I_0(t) & \text{ if } 0\leq t<1\\ \log I_0(t)-t &\text{ if } t\geq 1.
\end{cases}
$$ We prove
\begin{pro} If $\log s/\log q\to \infty$ as $q\to \infty$, then
 $$\mathcal{L}(s)=\exp\left(\frac{\phi(q)-1}{2\pi}s\log^2 s+ \frac{D(q)}{\pi}s\log s+O(s\phi(q)\log q)\right),$$ where
$$ D(q)=\phi(q)\left(\log q-\sum_{p|q}\frac{\log p}{p-1}\right)+(\phi(q)-1)\left(\int_{0}^{\infty}\frac{f(t)}{t^2}dt-\log \pi\right).$$
\end{pro}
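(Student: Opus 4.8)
The plan is to take logarithms in the product formula (4.1), so that $\log\mathcal L(s)=\sum_{\chi\ne\chi_0}\sum_{\gamma_\chi>0}\log I_0\bigl(2s(\tfrac14+\gamma_\chi^2)^{-1/2}\bigr)$, and to evaluate this sum by partial summation against the zero-count $N_q$ of Lemma 2.4. Since $\log I_0$ is smooth on $[0,\infty)$ with $|(\log I_0)'|\le 1$ and $(\log I_0)'(u)\ll u$ near $u=0$ (Lemma 4.2), the boundary terms vanish, and after the substitution $u=2s(\tfrac14+t^2)^{-1/2}$, i.e.\ $t=t(u):=(4s^2u^{-2}-\tfrac14)^{1/2}$ for $0<u<4s$, this gives
$$
\log\mathcal L(s)=\int_0^{4s}N_q\bigl(t(u)\bigr)\,(\log I_0)'(u)\,du .
$$
Throughout, the hypothesis $\log s/\log q\to\infty$ is used only to guarantee, say, $s\ge q$, so that error terms of shape $\phi(q)(\log sq)^2$ are absorbed into $O(s\phi(q)\log q)$; I also use that $I_1:=\int_0^\infty f(t)\,t^{-2}\,dt$ and $I_2:=\int_0^\infty f(t)(\log t)\,t^{-2}\,dt$ are finite, which follows from Lemma 4.2.

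Next I would insert the main term of $N_q(t(u))$. Fix a large absolute constant $T_1$. On $[2s/T_1,4s]$ one has $t(u)\le T_1$, hence $N_q(t(u))\le N_q(T_1)\ll\phi(q)\log q$, so that range contributes $O(s\phi(q)\log q)$; on the bulk range $0<u\le 2s/T_1$ one has $t(u)=\tfrac{2s}{u}\bigl(1+O(u^2/s^2)\bigr)$ and $\log t(u)=\log\tfrac{2s}{u}+O(u^2/s^2)$, so Lemma 2.4 gives $N_q(t(u))=\frac{(\phi(q)-1)s}{\pi u}\log\frac{2s}{u}+\frac{R(q)s}{\pi u}+E(u)$ with $\int_0^{2s/T_1}|E(u)|\,du\ll s\phi(q)\log q$ (the dominant part of $E$ being the $O(\phi(q)\log(sq/u))$ term of Lemma 2.4). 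Replacing $N_q(t(u))$ by this main term and extending the resulting integrals to $(0,4s]$ (which costs only $O(s\phi(q)\log q)$) gives
$$
\log\mathcal L(s)=\frac{(\phi(q)-1)s}{\pi}\int_0^{4s}\frac{(\log I_0)'(u)}{u}\log\frac{2s}{u}\,du+\frac{R(q)s}{\pi}\int_0^{4s}\frac{(\log I_0)'(u)}{u}\,du+O(s\phi(q)\log q).
$$

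The two remaining $u$-integrals I would evaluate by one more integration by parts, which reduces them to $\int_0^{4s}\tfrac{\log I_0(u)}{u^2}\,du$ and $\int_0^{4s}\tfrac{\log I_0(u)\log u}{u^2}\,du$. Writing $\log I_0(u)=f(u)$ for $u<1$ and $\log I_0(u)=u+f(u)$ for $u\ge1$, these equal $I_1+\log(4s)+O(s^{-1}\log s)$ and $I_2+\tfrac12\log^2(4s)+O(s^{-1}\log^2 s)$ respectively, the $\log$ and $\log^2$ terms coming from $\int_1^{4s}u^{-1}\,du$ and $\int_1^{4s}u^{-1}\log u\,du$. A short computation then gives
$$
\int_0^{4s}\frac{(\log I_0)'(u)}{u}\,du=A_0+\log(4s)+O(s^{-1}\log s),\qquad
\int_0^{4s}\frac{(\log I_0)'(u)}{u}\log\frac{2s}{u}\,du=\tfrac12\log^2(2s)+A_0\log(2s)+O(1),
$$
where $A_0=I_1+1$ (the cross term $\log(2s)\log(4s)-\tfrac12\log^2(4s)$ collapsing to $\tfrac12\log^2(2s)+O(1)$).

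Substituting these estimates and expanding $\log(2s)$ and $\log(4s)$, the $s\log^2 s$ term comes out as $\frac{\phi(q)-1}{2\pi}s\log^2 s$, while the coefficient of $\frac{s\log s}{\pi}$ equals $(\phi(q)-1)(A_0+\log2)+R(q)$; using $A_0=I_1+1$ and the definition of $R(q)$ from Lemma 2.4, this collapses to
$$
\phi(q)\Bigl(\log q-\sum_{p|q}\tfrac{\log p}{p-1}\Bigr)+(\phi(q)-1)(I_1-\log\pi)=D(q),
$$
the occurrence of $\pi$ coming exactly from $\log2-\log2\pi=-\log\pi$; everything else is $O(s\phi(q)\log q)$, which gives the claimed formula for $\mathcal L(s)=\exp(\log\mathcal L(s))$. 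I expect the main difficulty to lie in the uniform control of the error terms — in particular in checking that, after the substitution, the contributions of the very high and very low zeros are genuinely $O(s\phi(q)\log q)$ — together with the bookkeeping needed to retain the \emph{secondary} term $\frac{R(q)}{2\pi}T$ of $N_q(T)$, which is the source of the $\log2\pi$ and hence of the precise form of $D(q)$, and the $\log(4s)$ and $\tfrac12\log^2(4s)$ pieces of the $u$-integrals, which interact with the factor $\log(2s)$ to build up the $\tfrac12 s\log^2 s$ main term.
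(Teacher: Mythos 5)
Your proof is correct and reaches the same constant $D(q)$ via the same key inputs (Lemmas 2.4 and 4.2 and the constant $A_0=1+\int_0^\infty f(u)u^{-2}\,du$), but the bookkeeping is organized differently from the paper's. The paper first replaces $(\tfrac14+\gamma_\chi^2)^{1/2}$ by $\gamma_\chi$ for $\gamma_\chi>2$ (an $O(s\phi(q)\log q)$ adjustment) and then splits the Bessel function at the level of the zero sum, writing $\log I_0(2s/\gamma)=\tfrac{2s}{\gamma}\mathbf{1}_{\gamma\le 2s}+f(2s/\gamma)$; the harmonic sum $2s\sum_{2<\gamma\le 2s}\gamma^{-1}$ then carries the $\tfrac12 s\log^2 s$ term together with the $1+\log 2$, while a second partial summation of the $f$-sum produces the factor $\int_0^\infty f(u)u^{-2}\,du$. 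You instead keep $\log I_0$ intact, perform a single partial summation with the exact substitution $u=2s(\tfrac14+t^2)^{-1/2}$ to reach $\log\mathcal{L}(s)=\int_0^{4s}N_q(t(u))(\log I_0)'(u)\,du$, insert the main term of $N_q$, and only then split $\log I_0$ inside the two resulting $u$-integrals. These are the same computation in a different order: your version avoids the separate $\sqrt{\tfrac14+\gamma^2}\leftrightarrow\gamma$ comparison, at the cost of handling the boundary range $t(u)\le T_1$ explicitly and of slightly heavier $\log 2$ accounting in the collapse $\log(2s)\log(4s)-\tfrac12\log^2(4s)=\tfrac12\log^2(2s)+O(1)$. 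Your final coefficient $(\phi(q)-1)(A_0+\log 2)+R(q)=\pi D(q)$ matches the paper's $R(q)+(\phi(q)-1)(1+\log 2)+(\phi(q)-1)\int_0^\infty f(u)u^{-2}\,du$.
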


\begin{proof}
First, it follows from Lemma 4.2 that $\log I_0$ is a Lipshitz function. Therefore, using (4.1) and Lemma 2.4 we get
\begin{equation}
\log \mathcal{L}(s)=\sum_{\substack{\chi\neq \chi_0\\ \chi\bmod q}}\sum_{\gamma_{\chi}>0}\log I_0
\left(\frac{2s}{\sqrt{\frac14+\gamma_{\chi}^2}}\right)=\sum_{\substack{\chi\neq \chi_0\\ \chi\bmod q}}\sum_{\gamma_{\chi}>2}\log I_0
\left(\frac{2s}{\gamma_{\chi}}\right)+E_2,
\end{equation}
where
\begin{equation*}
\begin{aligned}
 E_2&\ll s\sum_{\substack{\chi\neq \chi_0\\ \chi\bmod q}}\sum_{0<\gamma_{\chi}<2}\frac{1}{\sqrt{\frac14+ \gamma_{\chi}^2}}+s\sum_{\substack{\chi\neq \chi_0\\ \chi\bmod q}}\sum_{\gamma_{\chi}>2}
\left(\frac{1}{\gamma_{\chi}}-\frac{1}{\sqrt{\frac14+\gamma_{\chi}^2}}\right)\\
&\ll sN_q(2)+ s\sum_{\substack{\chi\neq \chi_0\\ \chi\bmod q}}\sum_{\gamma_{\chi}>2}\frac{1}{\frac14+\gamma_{\chi}^2}\ll s\phi(q)\log q.
\end{aligned}
\end{equation*}
 Furthermore, note that
\begin{equation} \sum_{\substack{\chi\neq \chi_0\\ \chi\bmod q}}\sum_{\gamma_{\chi}>2}\log I_0
\left(\frac{2s}{\gamma_{\chi}}\right)=2s\sum_{\substack{\chi\neq \chi_0\\ \chi\bmod q}}\sum_{2<\gamma_{\chi}\leq 2s}
\frac{1}{\gamma_{\chi}}+\sum_{\substack{\chi\neq \chi_0\\ \chi\bmod q}}\sum_{\gamma_{\chi}>2}f
\left(\frac{2s}{\gamma_{\chi}}\right).
\end{equation}
Now, we infer from Lemma 2.4 that
\begin{equation*}
\begin{aligned}
&\sum_{\substack{\chi\neq \chi_0\\ \chi\bmod q}}\sum_{2<\gamma_{\chi}\leq 2s}
\frac{1}{\gamma_{\chi}}=\int_{2}^{2s}\frac{d N_q(t)}{t}= \frac{N_q(2s)}{2s}+\int_{2}^{2s}\frac{N_q(t)}{t^2}dt+O(\phi(q)\log q)\\
&=\frac{(\phi(q)-1)}{2\pi}\log s+\frac{1}{2\pi}\int_{2}^{2s}\left((\phi(q)-1)\frac{\log t}{t}+\frac{R(q)}{t}\right)dt+O(\phi(q)\log q)\\
&= \frac{(\phi(q)-1)}{4\pi}\log^2s +\frac{1}{2\pi}(R(q)+(\phi(q)-1)(1+\log 2))\log s +O(\phi(q)\log q).\\
\end{aligned}
\end{equation*}
On the other hand, the second sum on the RHS of (4.4) equals
\begin{equation}
\int_{2}^{\infty}f\left(\frac{2s}{t}\right)d N_q(t)
= -\int_2^{\infty}
f\left(\frac{2s}{t}\right)'\left(\frac{\phi(q)-1}{2\pi}t\log t+\frac{R(q)}{2\pi}t\right)dt + E_3,
\end{equation}
where
$$ E_3\ll |f(s)N_q(2)| + \phi(q)\int_{2}^{\infty}\frac{s}{t^2}
\left|f'\left(\frac{2s}{t}\right)\right|
\log(qt) dt \ll s\phi(q)\log q, $$
which follows from Lemmas 2.4 and 4.2 along with the fact that $\lim_{t\to\infty}f\left(\frac{2s}{t}\right)N_q(t)=0$.
This yields
$$ \int_{2}^{\infty}f\left(\frac{2s}{t}\right)dN_q(t)=\int_2^{\infty}
f\left(\frac{2s}{t}\right)\left(\frac{\phi(q)-1}{2\pi}\log t+\frac{R(q)+\phi(q)-1}{2\pi}\right)dt+ O(s\phi(q)\log q).$$
Making the change of variable
$u=2s/t$, we deduce that the integral on the RHS of the previous estimate equals
\begin{align*}
&2s\int_0^{s}\frac{f(u)}{u^2}\left(\frac{\phi(q)-1}{2\pi}\log \left(\frac{2s}{u}\right)+\frac{R(q)+\phi(q)-1}{2\pi}\right)du\\
&= \frac{ (\phi(q)-1)}{\pi}s\log s \int_0^{s}\frac{f(u)}{u^2}du+O(s\phi(q)\log q)\\
&=\frac{ (\phi(q)-1)}{\pi}s\log s \int_0^{\infty}\frac{f(u)}{u^2}du+O(s\phi(q)\log q),
\end{align*}
using that $R(q)\sim \phi(q)\log q$ by Lemma 2.3 and
$$ \int_0^{\infty}\frac{f(u)\log u}{u^2}du<\infty \ \text{ and } \ \int_{s}^{\infty}\frac{f(u)}{u^2}du\ll \int_{s}^{\infty}\frac{\log u}{u^2}du\ll \frac{\log s}{s},$$
which follow from Lemma 4.2.
The proposition follows upon collecting the above estimates.
\end{proof}

\begin{proof}[Proof of Theorem 4]
Assume that $V$ is such that $V/(\phi(q)\log^2q)\to\infty$ as $q\to\infty$, and let $s=s(V)>1$ be the unique solution to the equation
\begin{equation}
2\pi V= (\phi(q)-1)\log^2 s+2\left(D(q)+\phi(q)-1\right)\log s.
\end{equation}
Then
\begin{equation}
 s= \exp\left(-\frac{D(q)}{\phi(q)-1}-1 +\sqrt{\left(\frac{D(q)}{\phi(q)-1}+1\right)^2+\frac{2\pi V}{\phi(q)-1}}\right).
\end{equation}
Let $\epsilon, \delta>0$ be small real numbers to be chosen later and define
$$ s_1:=(1+\epsilon)s, s_2=(1-\epsilon)s, \text{ and } V_1:=(1+\delta)V, V_2:=(1-\delta)V.$$
Moreover, let
$$ I_1 :=\int_{V_1}^{\infty}se^{st} \rho_q(t)dt, \ \text{ and } \ I_2:=\int_{-\infty}^{V_2}se^{st} \rho_q(t)dt.$$
Then
\begin{equation}
\frac{I_1}{\mathcal{L}(s)}= \frac{1}{(1+\epsilon)\mathcal{L}(s)}\int_{V_1}^{\infty} e^{-\epsilon st}s_1e^{s_1 t}\rho_q(t)dt
\leq e^{-\epsilon sV_1}\frac{\mathcal{L}(s_1)}{(1+\epsilon)\mathcal{L}(s)}.
\end{equation}
Therefore, appealing to Proposition 4.3 we obtain that the RHS of the previous inequality equals
$$
\exp\left(-\epsilon sV_1+\frac{\phi(q)-1}{2\pi}(s_1\log^2 s_1-s\log^2 s)+ \frac{D(q)}{\pi}(s_1\log s_1-s\log s) +O(s\phi(q)\log q)\right),$$
which in view of (4.6) becomes
$$ \exp\left(-\epsilon\delta sV+\frac{\phi(q)-1}{\pi}s\log s((1+\epsilon)\log(1+\epsilon)-\epsilon)+O(s\phi(q)\log q)\right).$$
Combining this estimate with (4.8) and using that $(1+\epsilon)\log(1+\epsilon)-\epsilon\leq \epsilon^2$, we deduce
$$
\frac{I_1}{\mathcal{L}(s)}\leq \exp\left(-\epsilon\delta sV+\epsilon^2\frac{\phi(q)-1}{\pi}s\log s +O(s\phi(q)\log q)\right).
$$
Now, (4.6) yields $2\pi V\geq (\phi(q)-1)\log^2 s$. Therefore, choosing $\delta= 4\epsilon/\log s$ we get
$$
\frac{I_1}{\mathcal{L}(s)}\leq \exp\left(-\frac{\epsilon^2}{2\pi}(\phi(q)-1) s\log s +O(s\phi(q)\log q)\right).$$
Furthermore, taking $\epsilon=C\sqrt{\log q/\log s}$ for a suitably large constant $C>0$, we obtain
$$
\frac{I_1}{\mathcal{L}(s)}\leq \exp\left(-s\phi(q)\log q\right).
$$
A similar argument shows that
$$
\frac{I_2}{\mathcal{L}(s)}\leq  \exp\left(-s\phi(q)\log q\right).
$$
Combining these two inequalities with Proposition 4.3 we deduce
$$
\int_{V(1-\delta)}^{V(1+\delta)}s e^{st}\rho_q(t)dt=\exp\left(\frac{\phi(q)-1}{2\pi}s\log^2 s+ \frac{D(q)}{\pi}s\log s+O(s\phi(q)\log q)\right).
$$
On the other hand, since $V\ll \phi(q)\log ^2s$ then
$$\int_{V(1-\delta)}^{V(1+\delta)}se^{st}dt=\exp\left(sV+ O\left(s\phi(q)\sqrt{\log s\log q}\right)\right).$$
Hence, using that $\rho_q$ is a non-increasing function we infer from the two previous estimates that
\begin{equation}
 \rho_q(V(1+\delta))\leq \exp\left(-\frac{\phi(q)-1}{\pi}s\log s+ O\left(s\phi(q)\sqrt{\log s\log q}\right)\right) \leq \rho_q(V(1-\delta)).
\end{equation}
Now, using that $D(q)\sim \phi(q)\log q$ by Lemma 2.3 we get
\begin{equation}
 \log s= \sqrt{\frac{2\pi V}{\phi(q)-1}}\left(1+ O\left(\left(\frac{\phi(q)\log^2 q}{V}\right)^{1/2}\right)\right),
\end{equation}
and
\begin{equation}
\sqrt{\frac{\log q}{\log s}}\ll \left(\frac{\phi(q)\log^2q}{V}\right)^{1/4}.
\end{equation}
Moreover, we have
\begin{equation}
\begin{aligned}
&\exp\left(-\frac{D(q)}{\phi(q)-1}-1 +\sqrt{\left(\frac{D(q)}{\phi(q)-1}+1\right)^2+\frac{2\pi V(1+O(\delta))}{\phi(q)-1}}\right)\\
&=\exp\left(-\frac{D(q)}{\phi(q)-1}-1 +\sqrt{\left(\frac{D(q)}{\phi(q)-1}+1\right)^2+\frac{2\pi V}{\phi(q)-1}}\right)\left(1+ O\left(\left(\frac{\phi(q)\log^2q}{V}\right)^{1/4}\right)\right).
\end{aligned}
\end{equation}
Finally, the theorem follows upon using (4.6) and inserting the estimates (4.10)-(4.12) into (4.9).

\end{proof}


\begin{thebibliography}{DDDD}


\bibitem[1]{Da} H. Davenport,
\emph{Multiplicative number theory},
Graduate Texts in Mathematics, 74. Springer-Verlag, New York, 2000.

\bibitem[2]{FiM}  D. Fiorilli and G. Martin,
\emph{Inequities in the Shanks-R\'enyi Prime Number Race: An asymptotic formula for the densities}. To appear in J. Reine Angew. Math.

\bibitem[3]{KT}  S. Knapowski and P. Tur\'an,
\emph{Comparative prime-number theory. I.},
 Acta Math. Acad. Sci. Hungar. 13 (1962) 299-–314; II. 13 (1962), 315--342;
 III. 13 (1962), 343--364; IV. 14 (1963), 31--42; V. 14 (1963), 43--63; VI. 14 (1963), 65--78; VII. 14 (1963), 241--250; VIII. 14 (1963), 251--268.

\bibitem[4] {Ka1} J. Kaczorowski,
\emph{A contribution to the Shanks-R\'enyi race problem},
Quart. J. Math. Oxford Ser. (2) 44 (1993), no. 176, 451–-458.

\bibitem[5] {Ka2} J. Kaczorowski,
\emph{On the Shanks-R\'enyi race problem},
Acta Arith. 74 (1996), no. 1, 31–-46.

\bibitem[6]{La} Y. Lamzouri,
\emph{Prime number races with three or more competitors}, 38 pages. ArXiv:1101.0836.

\bibitem[7]{Li}  J. E. Littlewood,
\emph{On the class number of the
corpus $P(\sqrt{-k})$},
Proc. London Math. Soc  27 (1928), 358--372.

\bibitem[8]{Mona}  W. R.  Monach,
\emph{Numerical investigation of several problems in number theory},
Univ. of Michigan Ph.D Dissertation, Ann Arbor, 1980.

\bibitem[9]{Mont} H. L. Montgomery,
\emph{The zeta function and prime numbers},
Proceedings of the Queen's Number Theory Conference, 1979 (Kingston, Ont., 1979), pp. 14–-24, Queen's Papers in Pure and Appl. Math. 54, Queen's Univ. Kingston, 1980.

\bibitem[10]{MO} H. L. Montgomery and A. M. Odlyzko,
\emph{Large deviations of sums of independent random variables},
Acta Arith.  49 (1988), no. 4, 427--434.

\bibitem[11]{RS}  M. Rubinstein and P. Sarnak,
\emph{Chebyshev's bias},
Experiment. Math. 3 (1994), no. 3, 173–-197.



\end{thebibliography}
\end{document}